\setlist[itemize,1]{leftmargin=20pt}
\newlist{steps}{enumerate}{1}
    \setlist[steps]{label=\textup{\textbf{\arabic{*}.~Step:}}, ref=\textup{\arabic{*}.~Step}, align=left, leftmargin=0pt, itemindent=*,labelindent=0pt, labelsep=3pt, itemsep=3pt, parsep=2pt,topsep=2pt}%
\newcounter{i} 
\newtoks\striche 
\newcommand{\R}{\mathbb{R}}
\newcommand{\N}{\mathbb{N}} 
\newcommand{\argdot}{\boldsymbol{\cdot}}
\newcommand{\Lp}[1]{\mathrm{L}^{#1}} 
\newcommand{\Lptan}[1]{\mathrm{L}_{\tau}^{#1}} 
\newcommand{\indicator}{\mathds{1}} 
\newcommand{\cl}[2][]{\overline{#2}\ifthenelse{ \equal{#1}{} }{}{^{#1}}} 
\newcommand{\ball}{\mathrm{B}}
\DeclareMathOperator{\spn}{span}
\DeclareMathOperator{\supp}{supp}
\DeclareMathOperator{\dom}{dom}
\newcommand{\grad}{\nabla}
\newcommand{\tangrad}{\nabla_{\tau}}
\DeclareMathOperator{\rot}{curl}
\DeclareMathOperator{\dist}{dist}
\DeclarePairedDelimiter{\set}{\{}{\}}
\DeclarePairedDelimiter{\norm}{\lVert}{\rVert}
\DeclarePairedDelimiter{\abs}{\vert}{\vert}
\DeclarePairedDelimiterX{\dset}[2]{\{}{\}}{#1\,\delimsize\vert\,\mathopen{} #2}
\DeclarePairedDelimiterX{\scprod}[2]{\langle}{\rangle}{#1,#2}
\DeclarePairedDelimiterX{\dualprod}[2]{\langle}{\rangle}{#1,#2}
\DeclarePairedDelimiterX{\sdprod}[2]{\llangle}{\rrangle}{#1,#2} 
\newcommand{\adjunsymb}{\ast} 
\newcommand{\adjun}[1][1]{%
  \setcounter{i}{1}%
  \striche={\adjunsymb}%
  \loop%
  \ifnum\value{i}<#1%
  \striche=\expandafter{\the\expandafter\striche\adjunsymb}%
  \stepcounter{i}%
  \repeat%
  ^{\the\striche}%
}
\newcommand{\mapping}[4]{%
  \left\{%
    \begin{array}{rcl}%
      #1 &\to & #2, \\
      #3 &\mapsto & #4
    \end{array}%
  \right.%
}
\newcommand{\transposed}{^{\mathsf{T}}}
\newcommand{\trans}{\transposed}
\newcommand{\soboH}{\mathrm{H}}
\newcommand{\idop}{\mathrm{I}}
\newcommand{\hH}{\hat{\soboH}}
\newcommand{\cH}{\mathring{\soboH}}
\newcommand{\conC}{\mathrm{C}}
\newcommand{\Cc}[1][\infty]{\mathring{\mathrm{C}}^{#1}}
\newcommand{\boundtr}[1][]{\gamma_{0}\ifthenelse{\equal{#1}{}}{}{\big\vert_{#1}}}
\newcommand{\normaltr}[1][]{\gamma_{\nu}\ifthenelse{\equal{#1}{}}{}{\big\vert_{#1}}}
\newcommand{\tantr}[1][]{\pi_{\tau}\ifthenelse{\equal{#1}{}}{}{\big\vert_{#1}}}
\newcommand{\tanxtr}[1][]{\gamma_{\tau}\ifthenelse{\equal{#1}{}}{}{\big\vert_{#1}}}
\theoremstyle{plain}
\newtheorem{theorem}{Theorem}[section]
\newtheorem{lemma}[theorem]{Lemma}
\newtheorem{corollary}[theorem]{Corollary}
\theoremstyle{definition}
\newtheorem{definition}[theorem]{Definition}
\newtheorem{remark}[theorem]{Remark}
    \providecommand{\proofNameStyle}{\bfseries}
    \renewenvironment{proof}[1][\proofname]{\par
      \pushQED{\qed}%
      \normalfont \topsep6\p@\@plus6\p@\relax
      \trivlist
      \item[\hskip\labelsep\proofNameStyle
      #1\@addpunct{.}]\ignorespaces
    }{%
      \popQED\endtrivlist\@endpefalse
    }
\begin{document}

\title[Weak equals strong L\textsuperscript{2} regularity]{Weak equals strong L\textsuperscript{2} regularity for partial tangential traces on Lipschitz domains}

\ifboolexpr{togl{birk_t2} or togl{birk}}{%
    \subjclass{46E35, 35Q61}%
  }{%
    \subjclass[2020]{46E35, 35Q61}%
  }%
\keywords{Maxwell's equations, tangential traces, boundary traces, Lipschitz domains, Lipschitz boundary, density}

\author[N.~Skrepek]{Nathanael Skrepek\,\orcidlink{0000-0002-3096-4818}}
    \thanks{\textit{E-mail}: \href{mailto:academia@skrepek.at}{academia@skrepek.at}}
  
\address{%
  Department of Applied Mathematics,
  University of Twente,
  P.O. Box 217,
  7500 AE Enschede,
  The Netherlands
  }
\email{academia@skrepek.at}

\author[D.~Pauly]{Dirk Pauly\,\orcidlink{0000-0003-4155-7297}}
\address{Technische Universität Dresden (TUDD), Fakultät Mathematik, Institut für Analysis, Zellescher Weg 12-14, 01069 Dresden, Germany}
\email{dirk.pauly@tu-dresden.de}

\begin{abstract}
  We investigate the boundary trace operators that naturally correspond to $\soboH(\rot,\Omega)$, namely the tangential and twisted tangential trace, where $\Omega \subseteq \R^{3}$. In particular we regard partial tangential traces, i.e., we look only on a subset $\Gamma$ of the boundary $\partial\Omega$.
  We assume both $\Omega$ and $\Gamma$ to be strongly Lipschitz (possibly unbounded).
  We define the space of all $\soboH(\rot,\Omega)$ fields that possess a $\Lp{2}$ tangential trace in a weak sense and show that the set of all smooth fields is dense in that space, which is a generalization of \cite{BeBeCoDa97}.
  This is especially important for Maxwell's equation with mixed boundary condition as we answer the open problem by Weiss and Staffans in \cite[Sec.~5]{WeSt13} for strongly Lipschitz pairs.
\end{abstract}

\maketitle

\section{Introduction}

We will regard a strongly Lipschitz domain $\Omega \subseteq \R^{3}$ and the Sobolev space that corresponds to the $\rot$ operator
\[
  \soboH(\rot,\Omega) = \dset{f \in \Lp{2}(\Omega)}{\rot f \in \Lp{2}(\Omega)}
\]
and the ``natural'' boundary traces that are associated with the $\rot$ operator
\begin{align*}
  \tantr f \coloneqq \bigl(\nu \times f \big\vert_{\partial\Omega}\bigr) \times \nu \quad\text{and}\quad \tanxtr f \coloneqq \nu \times f \big\vert_{\partial\Omega}
  \quad\text{for}\quad f \in \Cc(\R^{3}),
\end{align*}
where $\nu$ denotes the outer normal vector on the boundary of $\Omega$ and $\Cc(\R^{3})$ denotes the $\conC^{\infty}$ functions with compact support on $\R^{3}$. These boundary traces are called \emph{tangential trace} and \emph{twisted tangential trace}, respectively.
They are motivated by the integration by parts formula
\begin{equation*}
  \scprod{\rot f}{g}_{\Lp{2}(\Omega)} - \scprod{f}{\rot g}_{\Lp{2}(\Omega)} = \scprod{\tanxtr f}{\tantr g}_{\Lp{2}(\partial\Omega)}.
\end{equation*}
We could even extend these boundary operators to $\soboH(\rot,\Omega)$ by introducing suitable boundary spaces, see e.g., \cite{BuCoSh02} for full boundary traces or \cite{Sk21} for partial boundary traces. However, in this article we focus on those $f \in \soboH(\rot,\Omega)$ that have a meaningful $\Lp{2}(\partial\Omega)$ (twisted) tangential trace.
Hence, for $\Gamma \subseteq \partial\Omega$ we are interested in the following spaces
\begin{align*}
  \cH_{\Gamma}(\rot,\Omega) &= \dset{f \in \soboH(\rot,\Omega)}{\tantr f = 0 \;\text{on}\; \Gamma}, \\
  \hH_{\Gamma}(\rot,\Omega) &= \dset{f \in \soboH(\rot,\Omega)}{\tantr f \;\text{is in}\; \Lp{2}(\Gamma)}.
\end{align*}
where we  will later state precisely what we mean by $\tantr f = 0$ on $\Gamma$ and $\tantr f \in \Lp{2}(\Gamma)$. In particular we are interested in $\hH_{\Gamma}(\rot,\Omega)$. Similar to Sobolev spaces there are two approaches to $\tantr f \in \Lp{2}(\Gamma)$: A weak approach by representation in an inner product and a strong approach by limits of regular functions.
We use the weak approach as definition, see \Cref{def:weak-tangential-trace}.
The question that immediately arises is:
\begin{center}
  \textit{``Do both approaches lead to the same space?''}
\end{center}

In~\cite[eq.\ (5.20)]{WeSt13} the authors observed this problem and concluded that it can cause ambiguity for boundary conditions, if the approaches don't coincide. In fact they stated this issue at the end of section~5 in \cite{WeSt13} as an open problem.
This problem can actually be viewed as a more general question that arises for quasi Gelfand triples, see~\cite[Conjecture~6.7]{Sk24}.

We will not explicitly define the strong approach, but show that the ``most'' regular functions ($\conC^{\infty}$ functions) are already dense in the weakly defined space, which immediately implies that any strong approach with less regular functions (e.g., $\soboH^{1}$) will lead to the same space. This is exactly what was done in \cite{BeBeCoDa97} for bounded $\Omega$ with $\Gamma = \partial\Omega$. Hence, we present a generalization of \cite{BeBeCoDa97} for \emph{partial} $\Lp{2}$ tangential traces and \emph{unbounded} $\Omega$.
In particular, we aim to prove the following two main theorems.

\begin{theorem}
  Let $\Omega$ be a (possibly unbounded) strongly Lipschitz domain and $\Gamma_{1} \subseteq \partial\Omega$ such that $(\Omega,\Gamma_{1})$ is a strongly Lipschitz pair, then
  $\Cc(\R^{3})$ is dense in $\hH_{\Gamma_{1}}(\rot,\Omega)$ with respect to $\norm{\argdot}_{\hH_{\Gamma_{1}}(\rot,\Omega)}$.
\end{theorem}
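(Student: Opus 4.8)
The plan is to reduce the global density statement to a local one via a partition of unity subordinate to a finite (or locally finite, in the unbounded case) atlas of Lipschitz charts adapted to the pair $(\Omega,\Gamma_1)$, and then to handle each local model by a combination of translation/extension and mollification. First I would fix $f \in \hH_{\Gamma_1}(\rot,\Omega)$ and, using the strongly Lipschitz structure, cover $\partial\Omega$ by finitely many bounded open sets $U_j$ in each of which, after a rigid motion and a bi-Lipschitz flattening, $\Omega$ looks like the region above a Lipschitz graph and $\Gamma_1$ corresponds either to the whole local boundary, to none of it, or to a piece cut out by another Lipschitz graph; add one interior patch $U_0$ with $\overline{U_0}\subseteq\Omega$. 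Choose a smooth partition of unity $\{\varphi_j\}$ with $\varphi_j$ supported in $U_j$. The point is that $\varphi_j f \in \hH_{\Gamma_1}(\rot,\Omega)$ as well, since multiplication by a smooth scalar preserves $\soboH(\rot,\Omega)$ (with $\rot(\varphi_j f) = \varphi_j \rot f + \grad\varphi_j \times f$) and multiplies the tangential trace by $\varphi_j|_{\partial\Omega}$, hence preserves both the $\Lp2(\Gamma_1)$-regularity and the vanishing-on-$\Gamma_1\setminus\Gamma_1'$ condition; so it suffices to approximate each $\varphi_j f$ separately, the interior piece $\varphi_0 f$ being dealt with by plain mollification.

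For a boundary patch, the key step is to transport the problem to a half-space-like model and then perform an \emph{inward translation} followed by mollification: extend the field suitably across the flattened boundary (or simply translate its support slightly into the interior, $f_\varepsilon := f(\argdot + \varepsilon e_3)$ on the shifted domain, then restrict), so that after translation the field is in $\soboH(\rot,\cdot)$ on a neighborhood of the relevant part of $\overline\Omega$; convolving with a standard mollifier then gives $\Cc$ fields, and one checks they converge to $\varphi_j f$ in graph norm on $\Omega$. The subtlety that makes this more than the classical $\soboH(\rot)$ density argument is that we must control the tangential trace on $\Gamma_1$: I would invoke the weak definition of the tangential trace (\Cref{def:weak-tangential-trace}) together with an integration-by-parts/duality characterization to show that the $\Lp2(\Gamma_1)$ tangential traces of the translated-and-mollified fields converge to $\tantr(\varphi_j f)$ in $\Lp2(\Gamma_1)$ — the finite $\Lp2(\Gamma_1)$-norm of the trace is precisely what supplies the compactness/continuity needed here, exactly as in \cite{BeBeCoDa97}. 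Where $\Gamma_1$ is only part of the local boundary, the auxiliary Lipschitz graph cutting it out has to be respected by the translation direction; choosing the shift direction transversal to \emph{both} graphs simultaneously (possible because both are Lipschitz, so their normal cones are strictly contained in a half-space) is the geometric heart of the matter.

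The main obstacle I anticipate is precisely this bookkeeping for the partial trace near the relative boundary $\partial\Gamma_1$ within $\partial\Omega$: one must simultaneously (i) keep the shifted/mollified field in $\soboH(\rot)$ near $\overline\Omega$, (ii) preserve or control the vanishing of $\tantr$ on $\Gamma_1^c := \partial\Omega\setminus\Gamma_1$, and (iii) ensure $\Lp2$-convergence of $\tantr$ on $\Gamma_1$, all with a single choice of translation direction valid on the whole chart. The strongly-Lipschitz-pair hypothesis is exactly what guarantees such a direction exists locally, so the proof structure is: localize, flatten, translate transversally, mollify, pass to the limit in the graph norm and (via the weak trace identity tested against tangential fields on $\Gamma_1$) in the $\Lp2(\Gamma_1)$-trace norm, then reassemble with the partition of unity. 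For unbounded $\Omega$ one additionally truncates with a smooth cutoff $\chi_R$ that is $1$ on $\ball(0,R)$; since $\grad\chi_R$ is bounded and supported in an annulus, $\chi_R f \to f$ in $\hH_{\Gamma_1}(\rot,\Omega)$, reducing to the bounded-support case already handled.
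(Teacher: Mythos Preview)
Your outline has a genuine gap at the central step. After you translate inward and mollify, you obtain smooth fields converging to $\varphi_j f$ in $\soboH(\rot,\Omega)$; this much is classical. But you then assert that their tangential traces converge to $\tantr(\varphi_j f)$ in $\Lp{2}(\Gamma_1)$, appealing to ``the weak definition of the tangential trace together with an integration-by-parts/duality characterization \dots\ exactly as in \cite{BeBeCoDa97}''. That reference does \emph{not} proceed by translate-and-mollify: it uses a regular decomposition $\soboH(\rot,\Omega) = \soboH^1(\Omega) + \grad \soboH^1(\Omega)$ and handles the gradient part via the auxiliary space $W(\Omega) = \{p \in \soboH^1(\Omega) : p|_{\partial\Omega} \in \soboH^1(\partial\Omega)\}$. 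Your duality idea gives at best weak convergence of the traces in $\Lp{2}(\Gamma_1)$ (or strong convergence in $\soboH^{-1/2}$), but upgrading to strong $\Lp{2}(\Gamma_1)$-convergence would require a uniform $\Lp{2}$-bound on the traces of the translated fields on interior slices---and nothing in the hypothesis $f \in \hH_{\Gamma_1}(\rot,\Omega)$ supplies that. Indeed, the $\Lp{2}$-trace is only assumed on $\Gamma_1$ itself, not on nearby parallel surfaces. You also list as obstacle (ii) ``the vanishing of $\tantr$ on $\Gamma_1^{\complement}$'', but $\hH_{\Gamma_1}(\rot,\Omega)$ carries no homogeneous condition on $\Gamma_0 = \partial\Omega \setminus \cl{\Gamma_1}$; this suggests a conflation with the companion theorem about $\hH(\rot,\Omega) \cap \cH_{\Gamma_0}(\rot,\Omega)$.

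The paper avoids the trace-convergence difficulty entirely by reversing the order: it \emph{first} proves density of $\Cc_{\Gamma_0}(\R^3)$ in $\hH(\rot,\Omega) \cap \cH_{\Gamma_0}(\rot,\Omega)$ via the decomposition $\cH_{\Gamma_0}(\rot,\Omega) = \cH^1_{\Gamma_0}(\Omega) + \grad \cH^1_{\Gamma_0}(\Omega)$ together with a density result in $W(\Omega)$, and \emph{then} deduces the present theorem by an abstract adjoint argument: writing $\widetilde{\gamma}_\tau$ for the restriction of $\tanxtr[\Gamma_1]$ to $\Cc(\R^3)$, one computes $\dom \widetilde{\gamma}_\tau^{\,*} = \dom \tanxtr[\Gamma_1]^{\,*}$ (using the already-established density for the homogeneous space), whence $\cl{\widetilde{\gamma}_\tau} = \tanxtr[\Gamma_1]$, which is exactly the desired density. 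The unbounded case is then handled by cutoff and localisation, as in your final paragraph.
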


\begin{theorem}
  Let $\Omega$ be a (possibly unbounded) strongly Lipschitz domain and $\Gamma_{0} \subseteq \partial\Omega$ such that $(\Omega,\Gamma_{0})$ is a strongly Lipschitz pair, then
  $\Cc_{\Gamma_{0}}(\R^{3})$ is dense in
  $\hH_{\partial\Omega}(\rot,\Omega) \cap \cH_{\Gamma_{0}} (\rot,\Omega)$
  with respect to $\norm{\argdot}_{\hH_{\partial\Omega}(\rot,\Omega)}$.
\end{theorem}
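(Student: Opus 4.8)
The plan is to follow the same scheme as the proof of the first main theorem — cut off at infinity, localise by a partition of unity, flatten the boundary by a bi-Lipschitz change of coordinates, reflect, mollify — but to carry along throughout the extra requirement that each approximant vanish in a neighbourhood of $\Gamma_{0}$. Writing $\Gamma_{1} \coloneqq \partial\Omega \setminus \cl{\Gamma_{0}}$, I would first observe that for $f \in \hH_{\partial\Omega}(\rot,\Omega) \cap \cH_{\Gamma_{0}}(\rot,\Omega)$ and $\chi \in \Cc(\R^{3})$ one has $\rot(\chi f) = \chi\rot f + \grad\chi \times f$ and $\tantr(\chi f) = \chi\,\tantr f$, so that $\chi f$ lies again in $\hH_{\partial\Omega}(\rot,\Omega) \cap \cH_{\Gamma_{0}}(\rot,\Omega)$; choosing $\chi$ equal to $1$ on exhausting balls reduces everything (including the unbounded case) to compactly supported $f$. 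Then, using a finite partition of unity subordinate to a cover of $\cl{\Omega} \cap \supp f$ by open sets $U_{j}$ that are either (a) disjoint from $\cl{\Gamma_{0}}$, or (b) disjoint from $\cl{\Gamma_{1}}$, or (c) neighbourhoods of interface points in which $(\Omega,\Gamma_{0})$ is in the strongly Lipschitz pair normal form, it suffices to approximate a single piece $f_{j} \coloneqq \chi_{j}f$ in each of the three local situations by elements of $\Cc_{\Gamma_{0}}(\R^{3})$ whose supports stay in a fixed neighbourhood of $U_{j}$.

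Cases (a) and (b) are routine. In case (a) the support of $f_{j}$ has positive distance to $\cl{\Gamma_{0}}$, so the local approximation underlying the first main theorem applies to $f_{j} \in \hH_{\partial\Omega}(\rot,\Omega)$; since that construction only transforms, reflects, translates and mollifies at a small scale, the resulting $\Cc(\R^{3})$ fields keep their support near $\supp f_{j}$ and hence lie in $\Cc_{\Gamma_{0}}(\R^{3})$. In case (b) all of $\partial\Omega$ near $\supp f_{j}$ lies in $\Gamma_{0}$, so $\tanxtr f_{j} = 0$ there; writing $\Omega$ locally as an epigraph $\set{x_{3} > \varphi(x')}$ I extend $f_{j}$ by $0$ to $\set{x_{3} < \varphi(x')}$ — a field in $\soboH(\rot)$ precisely because $\tanxtr f_{j} = 0$ kills the tangential jump across the graph, which is exactly what the weak vanishing of the tangential trace on $\Gamma_{0}$ encodes — translate it slightly into $\Omega$ by $x \mapsto f_{j}(x - te_{3})$ with $t > 0$ small, so that it vanishes on the strip $\set{\varphi(x') < x_{3} < \varphi(x') + t}$, let $t \to 0$, and mollify with parameter $\ll t$.

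Case (c) is the heart of the matter. In the normal form $\Omega$ becomes $\set{x_{3} > 0}$, $\Gamma_{0}$ becomes $\set{x_{3} = 0,\ x_{1} < 0}$ and $\Gamma_{1}$ becomes $\set{x_{3} = 0,\ x_{1} > 0}$; I transport $f_{j}$ by the covariant pull-back $u \mapsto (D\Psi)\transposed(u\circ\Psi)$, which preserves $\soboH(\rot)$, intertwines the tangential traces, and thus reduces the problem to these coordinates. Since $\tanxtr f_{j} = 0$ holds only on $\Gamma_{0}$, I extend $f_{j}$ by $0$ across that part of the hyperplane, obtaining $\bar f_{j} \in \hH_{\partial\Omega^{*}}(\rot,\Omega^{*})$ with $\Omega^{*} \coloneqq \set{x_{3} > 0} \cup \set{x_{1} < 0}$. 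I then compose with the flow $\Phi_{s}$ of a Lipschitz vector field $V_{s}$ that near $\Gamma_{0}$ points downward (and slightly sideways), pushing a one‑sided neighbourhood of $\cl{\Gamma_{0}}$ into the zero‑extension region $\set{x_{1} < 0,\ x_{3} < 0}$, and along $\Gamma_{1}$ points upward or tangentially, so that $\Phi_{s}(\cl{\Omega}) \subseteq \cl{\Omega^{*}}$ and nothing near $\Gamma_{1}$ leaves the domain of $\bar f_{j}$; the changeover between the two regimes is confined to $x_{1}$ of order $s$, and this is the only place the strongly Lipschitz pair hypothesis is genuinely used. Restricted to $\cl{\Omega}$, the covariant transport $(D\Phi_{s})\transposed(\bar f_{j}\circ\Phi_{s})$ then vanishes in a (thin but genuine) neighbourhood of $\cl{\Gamma_{0}}$, and a final mollification at a sufficiently small scale produces the required element of $\Cc_{\Gamma_{0}}(\R^{3})$.

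The main obstacle is to show that these transported and mollified fields still converge to $f_{j}$ in $\norm{\argdot}_{\hH_{\partial\Omega}(\rot,\Omega)}$ as $s \to 0$. Because the changeover scale of $V_{s}$ shrinks with $s$, the Lipschitz norm of $\Phi_{s}$, and hence $\norm{D\Phi_{s} - \idop}$, does not tend to $0$ uniformly; one must instead use that the set $T_{s}$ on which $D\Phi_{s}$ is far from $\idop$ has measure $O(s^{2})$ and drifts towards the interface, while the Jacobians $\det D\Phi_{s}$ stay bounded above and below uniformly in $s$. Splitting $(D\Phi_{s})\transposed(\bar f_{j}\circ\Phi_{s}) - f_{j} = \bigl((D\Phi_{s})\transposed - \idop\bigr)(\bar f_{j}\circ\Phi_{s}) + (\bar f_{j}\circ\Phi_{s} - f_{j})$, the second term vanishes in $\Lp{2}$ by continuity of composition, and the first is bounded by a constant times $\int_{\Phi_{s}(T_{s})}\abs{\bar f_{j}}^{2}$, which goes to $0$ by absolute continuity of the Lebesgue integral since $\abs{\Phi_{s}(T_{s})} \to 0$. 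The same bookkeeping handles $\rot$ through the covariance identity $\rot\bigl((D\Phi_{s})\transposed(u\circ\Phi_{s})\bigr) = (\det D\Phi_{s})(D\Phi_{s})^{-1}(\rot u\circ\Phi_{s})$ and the tangential trace on $\Gamma_{1}$ through its surface analogue, using that the ill‑behaved part of $\Gamma_{1}$ near the interface has surface measure $O(s)$; beyond this, all the estimates are the Piola‑transform, translation and mollification arguments already used for the first main theorem. Reassembling the localised pieces, each of which now vanishes near $\Gamma_{0}$, completes the proof.
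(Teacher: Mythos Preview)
Your route is entirely different from the paper's, and in fact reverses its logical order: the paper proves \emph{this} statement first (for bounded $\Omega$, then localises) and only afterwards derives the other density theorem by a duality argument. Rather than flattening and shearing the $\soboH(\rot)$-field itself, the paper invokes the regular decomposition $\cH_{\Gamma_{0}}(\rot,\Omega) = \cH^{1}_{\Gamma_{0}}(\Omega) + \grad \cH^{1}_{\Gamma_{0}}(\Omega)$ from \cite{PaScho22}, writes $f = f_{1} + \grad f_{2}$ with $f_{1},f_{2}\in\cH^{1}_{\Gamma_{0}}(\Omega)$, handles $f_{1}$ by the known $\soboH^{1}$ density of $\Cc_{\Gamma_{0}}(\R^{3})$, and for $\grad f_{2}$ observes that $\tantr\grad f_{2} \in \Lp{2}(\partial\Omega)$ forces $f_{2}\big\vert_{\partial\Omega} \in \soboH^{1}(\partial\Omega)$, i.e.\ $f_{2}$ lies in the auxiliary space $W(\Omega) \cap \cH^{1}_{\Gamma_{0}}(\Omega)$, where density of $\Cc_{\Gamma_{0}}(\R^{d})$ is established separately by an elementary scalar construction. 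All interface difficulty is thus transferred to the \emph{scalar} $\soboH^{1}$ setting, where it is much milder.

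Your direct method is in the spirit of \cite{BaPaScho16}; the cutoff at infinity and cases (a) and (b) are fine. Case (c), however, has a real gap in the $\Lp{2}(\Gamma_{1})$ trace convergence. Your absolute-continuity argument for the volume terms would need, as surface analogue, that the tangential trace of the transported field on the bad strip of $\Gamma_{1}$ is uniformly $\Lp{2}$-bounded. If the flow along $\Gamma_{1}$ points ``upward'' (an option you allow), then $\Phi_{s}$ carries that strip into the \emph{interior} of $\Omega^{*}$, where $\bar f_{j}$ is merely $\soboH(\rot)$ and need not have an $\Lp{2}$ tangential trace on a generic Lipschitz surface --- so there is nothing to bound. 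If instead you force the flow to be tangential everywhere on $\Gamma_{1}$ (so that traces pull back cleanly), the constraint $\Phi_{s}(\cl{\Omega}) \subseteq \cl{\Omega^{*}}$ forbids any downward component over $\{x_{1} > 0\}$, and the pulled-back field then fails to vanish in a full $3$-dimensional neighbourhood of the interface edge $\{x_{1}=x_{3}=0\}$: it vanishes only on a wedge that pinches off there, so a subsequent mollification at any fixed scale does not produce an element of $\Cc_{\Gamma_{0}}(\R^{3})$. You have not explained how to resolve this tension, and it is precisely what the paper's detour through $W(\Omega)$ circumvents.
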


However, it turned out that it is best to prove them in reversed order.

The importance of our density results lies in the context of Maxwell's equations with boundary conditions that involve a mixture of $\tantr$ and $\tanxtr$ in the sense of linear combination, e.g., this simplified instance of Maxwell's equations
\begin{alignat*}{2}
  \partial_{t} E(t,\zeta) &= \rot H(t,\zeta), \qquad&&t \geq 0, \zeta \in \Omega,\\
  \partial_{t} H(t,\zeta) &= -\rot E(t,\zeta), \qquad&&t \geq 0, \zeta \in \Omega,\\
  \tantr E(t,\xi) + \tanxtr H(t,\xi) &= 0, \qquad&&t \geq 0, \xi \in \Gamma_{1},\\
  \tantr E(t,\xi) &= 0, \qquad&&t \geq 0, \xi \in \Gamma_{0}.
\end{alignat*}
In order to properly formulate the boundary conditions we need to know what functions $E$, $H$ have tangential traces that allow such a linear combination. Especially when it comes to well-posedness our density results are needed to avoid the ambiguity that was observed in \cite{WeSt13}.

As suspected by Weiss and Staffans in \cite{WeSt13} the regularity of the interface of $\Gamma_{0} \subseteq \partial\Omega$ and $\Gamma_{1} \coloneqq \partial\Omega \setminus \cl{\Gamma_{0}}$ seems to play a role. At least for our answer we need that the boundary of $\Gamma_{0}$ is also strongly Lipschitz.

In particular our strategy is based on the following decomposition from \cite[Thm.~5.2]{PaScho22} for bounded $\Omega$
\begin{equation}%
  \label{eq:decomposition-of-cH-Gamma-0}
  \cH_{\Gamma_{0}}(\rot,\Omega) = \cH^{1}_{\Gamma_{0}}(\Omega) + \grad \cH_{\Gamma_{0}}^{1}(\Omega)
\end{equation}
which requires $(\Omega,\Gamma_{0})$ to be a strongly Lipschitz pair.
For our main result we consider the intersection of $\hH_{\partial\Omega}(\rot,\Omega)$ and \eqref{eq:decomposition-of-cH-Gamma-0}:
\begin{equation*}
  \hH_{\partial\Omega}(\rot,\Omega) \cap \cH_{\Gamma_{0}}(\rot,\Omega)
  = \cH^{1}_{\Gamma_{0}}(\Omega)
  + \hH_{\partial\Omega}(\rot,\Omega) \cap \grad \cH_{\Gamma_{0}}^{1}(\Omega).
\end{equation*}
Every element of $\cH^{1}_{\Gamma_{0}}(\Omega)$ can be approximated by a sequence in $\Cc_{\Gamma_{0}}(\R^{3})$ w.r.t.\ $\norm{\argdot}_{\soboH^{1}(\Omega)}$ (see \cite[Lem.~3.1]{BaPaScho16}), which is a stronger norm than the ``natural'' norm of $\hH_{\partial\Omega}(\rot,\Omega)$.
Hence, the challenging part will be finding an approximation by $\Cc_{\Gamma_{0}}(\R^{3})$ elements for all elements in
\[
  \hH_{\partial\Omega}(\rot,\Omega) \cap \grad \cH^{1}_{\Gamma_{0}}(\Omega).
\]

It even turned out that, if we can prove the decomposition~\eqref{eq:decomposition-of-cH-Gamma-0} also for less regular $\Gamma_{0}$, then our main theorems would automatically generalize for those less regular partitions of $\partial\Omega$, since this is the only occasion where the regularity of $\Gamma_{0}$ is used.

Finally, we will conclude the results for unbounded $\Omega$ by localization.

\section{Preliminary}%
\label{sec:preliminary}

For $\Omega \subseteq \R^{d}$ open and $\Gamma \subseteq \partial\Omega$ open we use the following notation (as in \cite{BaPaScho16})
\begin{align*}
  \Cc(\Omega) &\coloneqq \dset[\big]{f \in \conC^{\infty}(\Omega)}{\supp f \ \text{is compact in}\ \Omega}, \\
  \Cc_{\Gamma}(\Omega) &\coloneqq \dset*{f\big\vert_{\Omega}}{f \in \Cc(\R^{d}), \dist(\Gamma,\supp f) > 0},
\end{align*}
and $\soboH^{1}(\Omega)$ denotes the usual Sobolev space and $\cH^{1}_{\Gamma}(\Omega)$ is the subspace of $\soboH^{1}(\Omega)$ with homogeneous boundary data on $\Gamma$, i.e., $\cH_{\Gamma}^{1}(\Omega) = \cl[\soboH^{1}(\Omega)]{\Cc_{\Gamma}(\Omega)}$.

Note that the trace operators $\tantr$ and $\tanxtr$ are called \emph{tangential} traces, because $\nu \cdot \tantr f = 0$ and $\nu \cdot \tanxtr f = 0$. Hence, it is natural to introduce the \emph{tangential $\Lp{2}$} space on $\Gamma \subseteq \partial\Omega$ by
\begin{equation*}
  \Lptan{2}(\Gamma) = \dset{f \in \Lp{2}(\Gamma)}{\nu \cdot f = 0}.
\end{equation*}
This space is again a Hilbert space with the $\Lp{2}(\Gamma)$ inner product. Moreover, both $\tantr \Cc_{\partial\Omega \setminus \Gamma}(\R^{3})$ and $\tanxtr \Cc_{\partial\Omega \setminus \Gamma}(\R^{3})$ are dense in that space.

Next we recall the definition of a strongly Lipschitz domain, see e.g., \cite{grisvard1985}. Note that we use the definition also for unbounded domains.\footnote{We do not ask for a uniform Lipschitz constant or similar modifications for unbounded domains} Moreover, we need $\soboH^{1}$ spaces on strongly Lipschitz boundaries, see e.g., \cite{Sk25a} for a careful treatment.

\begin{definition}\label{def:strong-lipschitz-domain}
  Let $\Omega$ be an open subset of $\R^{d}$. We say $\Omega$ is a \emph{strongly Lipschitz domain}, if for every $p \in \partial\Omega$ there exist $\epsilon,h > 0$, a hyperplane $W = \spn\set{w_{1},\dots,w_{d-1}}$, where $\set{w_{1},\dots,w_{d-1}}$ is an orthonormal basis of $W$, and a Lipschitz continuous function $a_{W}\colon (p + W) \cap \ball_{\epsilon}(p) \to (-\frac{h}{2},\frac{h}{2})$ such that
  \begin{align*}
    \partial\Omega \cap C_{\epsilon,h}(p) &= \dset{x + a_{W}(x)v}{x \in (p+W) \cap \ball_{\epsilon}(p)}, \\
    \Omega \cap C_{\epsilon,h}(p) &= \dset{x + sv}{x \in (p+W) \cap \ball_{\epsilon}(p), -h < s < a_{W}(x)},
  \end{align*}
  where $v$ is the normal vector of $W$ and $C_{\epsilon,h}(p)$ is the cylinder $\dset{x + \delta v}{x \in (p+ W) \cap \ball_{\epsilon}(p), \delta \in (-h,h)}$.

  The boundary $\partial\Omega$ is then called \emph{strongly Lipschitz boundary}.
\end{definition}

The hyperplane $W$ and the vector $v$ induce a new coordinate system centered in $p$. With respect to this coordinate system the boundary of $\Omega$ is locally given by the graph of a Lipschitz continuous function, see \Cref{fig:lipschitz-boundary}.
\begin{figure}[h]
  \newcommand{\nvec}{v}
  \centering
  \begin{tikzpicture}[scale=2]
    \coordinate (vec-n) at (-1,0.5); 
    \coordinate (vec-t) at (0.5,1); 

    \coordinate[label=above:$p$] (p) at (0.5,1);
    \coordinate[label=below:$\nvec$] (n) at ($(p) + 0.25*(vec-n)$);

    \coordinate (H1) at ($(p) - (vec-t)$);
    \coordinate[label=left:$W$] (H2) at ($(p) + (vec-t)$);

    \coordinate (C1) at ($(p) + 0.6*0.89442719*(vec-t) + 0.7*(vec-n)$);
    \coordinate (C2) at ($(p) + 0.6*0.89442719*(vec-t) - 0.7*(vec-n)$);
    \coordinate (C3) at ($(p) - 0.6*0.89442719*(vec-t) - 0.7*(vec-n)$);
    \coordinate (C4) at ($(p) - 0.6*0.89442719*(vec-t) + 0.7*(vec-n)$);


    \draw (H1) -- (H2);

    \tikzstyle{transformation} = [shift={(-3,1)},rotate=-63.43]

    \draw ([transformation]p) + (0,0.05) -- ++(0,-0.05);
    \node[shift={(0,0.3)}] at ([transformation]p) {$0$};
    \draw ([transformation]C1) -- ([transformation]C2) -- ([transformation]C3) -- ([transformation]C4) -- ([transformation]C1);
    \draw ([transformation]H1) -- ([transformation]H2);
    \begin{scope}
      \clip ([transformation]C1) -- ([transformation]C2) -- ([transformation]C3) -- ([transformation]C4) -- ([transformation]C1);
      \draw[transformation,very thick] (1,0) .. controls (0.48,0.18) .. ([transformation]p);
      \draw[transformation,very thick] ([transformation]p) .. controls (1,1.7) .. (2,2);
    \end{scope}

    \draw (C1) -- (C2) -- (C3) -- (C4) -- (C1);
    \node[above] at (C1) {$C_{\epsilon,h}(p)$};

    \draw[->] (p) -- (n); 

    \draw (1,0) .. controls (0.48,0.18) .. (p);
    \draw (p) .. controls (1,1.7) .. (2,2);
    \draw
    (2,2) to [out=0,in=110] (3,1)
    (3,1) to [out=-100,in=5] (1.5,0.3)
    (1.5,0.3) to (1,0);

    \begin{scope}
      \clip (C1) -- (C2) -- (C3) -- (C4) -- (C1);
      \draw[very thick] (1,0) .. controls (0.48,0.18) .. (p);
      \draw[very thick] (p) .. controls (1,1.7) .. (2,2);
    \end{scope}

    \node at (1.75,1.05) {$\Omega$};
    \filldraw (p) circle (0.1em); 
  \end{tikzpicture}
  \caption{Lipschitz boundary}
  \label{fig:lipschitz-boundary}
\end{figure}

Corresponding to a strongly Lipschitz domain we define the following bi-Lipschitz continuous mapping
\begin{align*}
  k\colon
  \mapping{\partial\Omega \cap C_{\epsilon,h}(p)}{\ball_{\epsilon}(0) \subseteq \R^{d-1}}{\zeta}{W\transposed (\zeta - p),}
\end{align*}
where we used $W$ as the matrix \(\begin{bsmallmatrix} w_{1} & \dots & w_{d-1} \end{bsmallmatrix}\).
We call this mapping a \emph{strongly Lipschitz chart} of $\partial\Omega$ and we call its domain the \emph{chart domain}. Its inverse is given by
\begin{align*}
  k^{-1}\colon
  \mapping{\ball_{\epsilon}(0) \subseteq \R^{d-1}}{\partial\Omega \cap C_{\epsilon,h}(p)}{x}{p + Wx + a(x)v,}
\end{align*}
where we define $a(x) \coloneqq a_{W}(p+Wx)$, which is then a Lipschitz continuous function from $\ball_{\epsilon}(0) \subseteq \R^{d-1}$ to $\R$. Charts are used to regard the surface of $\Omega$ locally as a flat subset of $\R^{d-1}$. Every restriction of a chart $k$ to an open $\Gamma \subseteq \partial\Omega$ is again a chart.

\begin{definition}
  Let $\Omega$ be a strongly Lipschitz domain in $\R^{d}$. Then we say that an open $\Gamma_{0} \subseteq \partial\Omega$ is strongly Lipschitz, if for every $p \in \cl{\Gamma_{0}}$ there exists a chart $k\colon \partial\Omega \cap C_{\epsilon,h}(p) \to \ball_{\epsilon}(0) \subseteq \R^{d-1}$ such that $k(\Gamma_{0})$ is a strongly Lipschitz domain in $\R^{d-1}$.

  The boundary $\partial \Gamma_{0}$ is then called \emph{strongly Lipschitz boundary}.
\end{definition}

Note that it is sufficient to reduce the condition in the previous definition to $p \in \partial \Gamma_{0}$ instead of $p \in \cl{\Gamma_{0}}$.


\begin{definition}
  We call $(\Omega,\Gamma_{0})$ a \emph{strongly Lipschitz pair}, if $\Omega$ is a strongly Lipschitz domain and $\Gamma_{0} \subseteq \partial\Omega$ is strongly Lipschitz.
\end{definition}

Note that if $\Gamma_{0} \subseteq \partial\Omega$ is strongly Lipschitz, then also $\Gamma_{1} \coloneqq \partial\Omega \setminus \cl{\Gamma_{0}}$ is strongly Lipschitz. Hence, if $(\Omega,\Gamma_{0})$ is a strongly Lipschitz pair, then also $(\Omega, \Gamma_{1})$ is.

\begin{framed}
  Since we only deal with strongly Lipschitz domains and boundaries, we will omit the term ``strongly'' and just say \emph{Lipschitz domain}, \emph{Lipschitz boundary} and \emph{Lipschitz chart}.
\end{framed}

Recall the definition of a $\soboH^{1}$ function on the boundary of a Lipschitz domain, see e.g., \cite{Sk25a}. Since we only need the following for bounded domains we will state the following definition only for bounded domains.
\begin{definition}
  Let $\Omega \subseteq \R^{d}$ be a bounded Lipschitz domain.
  We say $f \in \Lp{2}(\partial\Omega)$ is in $\soboH^{1}(\partial\Omega)$, if for every Lipschitz chart $k\colon \Gamma \to U$ the mapping
  \begin{equation*}
    f \circ k^{-1} \quad \text{is in}\quad \soboH^{1}(U).
  \end{equation*}
\end{definition}

\section{Density results for $W(\Omega)$}

In this section we restrict ourselves to bounded Lipschitz domains $\Omega$. However, this is mostly for convenience.

\begin{definition}
  Let $\Omega \subseteq \R^{d}$ be a bounded Lipschitz domain.
  Then we define
  \begin{align*}
    W(\Omega)
    &\coloneqq \dset*{f \in \soboH^{1}(\Omega)}{f \big\vert_{\partial\Omega} \in \soboH^{1}(\partial\Omega)},
    \\
    \norm{f}_{W(\Omega)}
    &\coloneqq \Big(\norm{f}_{\soboH^{1}(\Omega)}^{2} + \norm{f \big\vert_{\partial\Omega}}_{\soboH^{1}(\partial\Omega)}^{2} \Big)^{\nicefrac{1}{2}}.
  \end{align*}
\end{definition}

The next lemma a is a crucial tool in our construction. The basic idea is: Take a smooth function $f$ with compact support on a flat domain ($U \subseteq \R^{d-1}$) extend it on the entire hyperplane $\R^{d-1}$ by $0$, and then extend it constantly in the orthogonal direction, i.e., $F\left(\begin{psmallmatrix} \zeta \\ \lambda \end{psmallmatrix} \right) = f(\zeta)$, where $\zeta \in \R^{d-1}$ and $\lambda \in \R$.
Multiplying with a cutoff function $\chi$ makes sure that this extension has compact support. By rotation and translation this can be done for arbitrary hyperplanes. \Cref{fig:illustration-of-extension} illustrates the construction.

\begin{lemma}\label{le:extend-from-boundary-to-volume}
  Let $\Omega\subseteq \R^{d}$ be a bounded Lipschitz domain, $\Gamma \subseteq \partial\Omega$,
  $k\colon \Gamma \to U$ be a Lipschitz chart and $f \in \soboH^{1}(\partial\Omega)$ with compact support in $\Gamma' \subseteq \Gamma$.
  Then there exists an $F \in \soboH^{1}(\R^{d}) \cap W(\Omega) \cap \cH^{1}_{\partial\Omega \setminus \Gamma'}(\Omega)$ such that $F \big\vert_{\partial\Omega} = f$.
  Moreover, there exists a sequence $(F_{n})_{n\in\N}$ in $\Cc_{\partial\Omega \setminus \Gamma'}(\R^{d})$ that converges to $F$ w.r.t.\ $\norm{\argdot}_{\soboH^{1}(\R^{d})} + \norm{\argdot}_{W(\Omega)}$, i.e., $F_{n}$ converges to $F$ in $\soboH^{1}(\R^{d})$ and $F_{n} \big\vert_{\partial\Omega}$ converges to $F \big\vert_{\partial\Omega}$ in $\soboH^{1}(\partial\Omega)$.
\end{lemma}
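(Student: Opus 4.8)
The plan is to follow literally the recipe sketched in the paragraph preceding the lemma, and then check the claimed memberships. First I would set up the extension in flat coordinates. Since $k\colon\Gamma\to U$ is a Lipschitz chart, we have $k^{-1}(x)=p+Wx+a(x)v$ with $a$ Lipschitz on $U\subseteq\ball_\epsilon(0)\subseteq\R^{d-1}$, and $f\circ k^{-1}\in\soboH^1(U)$ by definition of $\soboH^1(\partial\Omega)$; moreover $f\circ k^{-1}$ has compact support in $k(\Gamma')\Subset U$. Extend $g\coloneqq f\circ k^{-1}$ by zero to all of $\R^{d-1}$; this is still $\soboH^1(\R^{d-1})$ because the support stays away from $\partial U$. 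Next form the cylinder extension $\tilde G\bigl(\begin{psmallmatrix}\zeta\\\lambda\end{psmallmatrix}\bigr)=g(\zeta)$ on $\R^{d-1}\times\R$ (this is only locally $\soboH^1$, but that is harmless), pick a cutoff $\chi\in\Cc(\R^d)$ that is $1$ on a neighbourhood of the (compact) cylinder segment over $\supp g$ inside $C_{\epsilon,h}(p)$ and has $\dist(\partial\Omega\setminus\Gamma',\supp\chi)>0$, and define $F$ as $\chi\cdot\tilde G$ transported back to $\R^d$ via the rigid motion $x\mapsto p+[\,W\;|\;v\,]x$. Then $F\in\soboH^1(\R^d)$ with compact support, $\supp F$ is bounded away from $\partial\Omega\setminus\Gamma'$, so $F\in\cH^1_{\partial\Omega\setminus\Gamma'}(\Omega)$ once we know it is a limit of $\Cc_{\partial\Omega\setminus\Gamma'}$ functions.

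The second step is to verify $F\big\vert_{\partial\Omega}=f$ and $F\big\vert_{\partial\Omega}\in\soboH^1(\partial\Omega)$. Over the chart domain, a point of $\partial\Omega$ is $k^{-1}(x)=p+Wx+a(x)v$, which in the new coordinates is $\begin{psmallmatrix}x\\a(x)\end{psmallmatrix}$; hence $F(k^{-1}(x))=\chi(\dots)\,\tilde G\bigl(\begin{psmallmatrix}x\\a(x)\end{psmallmatrix}\bigr)=\chi(\dots)g(x)=g(x)$ wherever $\chi=1$, which by the choice of $\chi$ includes all of $\supp g$; elsewhere both sides are $0$. So $F\circ k^{-1}=g=f\circ k^{-1}$ on $U$, i.e. $F\big\vert_\Gamma=f$ on $\Gamma$, and off $\Gamma'$ both are $0$, giving $F\big\vert_{\partial\Omega}=f$ globally on $\partial\Omega$. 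Since $f\in\soboH^1(\partial\Omega)$ by hypothesis, $F\big\vert_{\partial\Omega}\in\soboH^1(\partial\Omega)$, hence $F\in W(\Omega)$.

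The third step is the approximating sequence. Mollify in the flat picture: let $g_n\coloneqq g*\varphi_{1/n}$ with a standard mollifier; for $n$ large $\supp g_n\Subset k(\Gamma')$, $g_n\in\Cc(\R^{d-1})$, and $g_n\to g$ in $\soboH^1(\R^{d-1})$. Set $F_n\coloneqq\chi\cdot\widetilde{g_n}$ transported back, so $F_n\in\Cc(\R^d)$ with $\dist(\partial\Omega\setminus\Gamma',\supp F_n)>0$, i.e. $F_n\in\Cc_{\partial\Omega\setminus\Gamma'}(\R^d)$. Convergence $F_n\to F$ in $\soboH^1(\R^d)$ follows because multiplication by the fixed $\conC^\infty$ cutoff $\chi$ and the cylinder extension are bounded operations on the relevant Sobolev norms over the bounded support region (the $\lambda$-derivative of $\widetilde{g_n}$ vanishes, so only finitely many directions contribute). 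For the boundary convergence, $F_n\circ k^{-1}(x)=\chi(\dots)g_n(x)\to\chi(\dots)g(x)=F\circ k^{-1}$ in $\soboH^1(U)$ by the same boundedness of multiplication by the Lipschitz-restricted $\chi\circ k^{-1}$ and composition with the bi-Lipschitz $k^{-1}$ on $\soboH^1$; off $\Gamma'$ all $F_n\big\vert_{\partial\Omega}$ vanish. Finitely many charts are not needed here since $f$ lives in one chart; patching with a partition of unity is therefore unnecessary.

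The main obstacle I anticipate is bookkeeping around the non-smoothness: $a$ is only Lipschitz, so the boundary trace of $\widetilde{g_n}$ equals $g_n$ exactly only on the flat part, and one must be careful that the graph $\begin{psmallmatrix}x\\a(x)\end{psmallmatrix}$ stays inside the region where $\chi\equiv1$ — this is what forces the cutoff to be chosen after fixing $\supp f$ and the cylinder $C_{\epsilon,h}(p)$, using $\abs{a}<h/2$. The other delicate point is that $\soboH^1(\partial\Omega)$ is defined chart-wise, so one must confirm that convergence in the single chart $k$, together with all traces vanishing identically outside $\Gamma'$, is enough to conclude convergence in $\soboH^1(\partial\Omega)$; this reduces to the fact that $\soboH^1$-regularity and convergence are preserved under composition with bi-Lipschitz chart transition maps, which is standard for Lipschitz charts. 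None of these is a genuine difficulty, but they are exactly the places where ``strongly Lipschitz'' (rather than merely Lipschitz graph) is implicitly used.
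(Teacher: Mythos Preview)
Your proposal is correct and follows essentially the same construction as the paper: pull back to flat coordinates via the chart, extend constantly in the normal direction, cut off, and approximate the flat function by smooth compactly supported ones. The one cosmetic difference is that the paper uses a \emph{one-dimensional} cutoff $\chi\in\Cc(\R)$ acting only on the normal coordinate $v\cdot(\zeta-p)$, chosen so that $\chi\equiv 1$ on $[-\tfrac{3}{2}\norm{a}_{\infty},\tfrac{3}{2}\norm{a}_{\infty}]$ and $\supp\chi\subseteq[-2\norm{a}_{\infty},2\norm{a}_{\infty}]$; this automatically forces $\supp F$ inside the cylinder $C_{\epsilon,h}(p)$ (since $2\norm{a}_{\infty}<h$) and hence $\supp F\cap\partial\Omega\subseteq\Gamma'$ without a separate distance argument, and it makes the normal derivative simply $\chi'\,\hat f(W\trans(\cdot-p))$, so the $\soboH^{1}(\R^{d})$ estimate is a one-line computation. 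Your $d$-dimensional cutoff works too but requires the extra verification that such a $\chi$ with $\dist(\partial\Omega\setminus\Gamma',\supp\chi)>0$ actually exists, which is exactly the ``bookkeeping'' issue you flag at the end.
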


\begin{proof}
  Let $p$, $W$ and $v$ be the point, hyperplane and normal vector, respectively, to the chart $k$. In particular $k^{-1}$ is given by
  \begin{equation*}
    k^{-1} \colon
    \mapping{U \subseteq \R^{d-1}}{\Gamma}{x}{p + W x + a(x) v,}
  \end{equation*}
  where $U$ is open and $a$ is the Lipschitz function.
  Let $\chi \in \Cc(\R)$ be a cutoff function such that
  \begin{equation*}
    \begin{aligned}[c]
      \chi(\lambda) \in
      \begin{cases}
        \set{1}, & \abs{\lambda} < \nicefrac{3}{2}\norm{a}_{\infty},\\
        [0,1], & \abs{\lambda} \in (\nicefrac{3}{2},2)\norm{a}_{\infty}, \\
        \set{0}, & \abs{\lambda} > 2\norm{a}_{\infty}.
      \end{cases}
    \end{aligned}
    \quad
    \begin{aligned}[c]
      \begin{tikzpicture}
        \coordinate (A) at (2.2,0);
        \coordinate (B) at (-2.2,0);
        \coordinate (C) at (1.3,0);
        \coordinate (D) at (-1.3,0);

        \draw[Latex-Latex] ($(B) + (-1,0)$) -- ($(A) + (1,0)$);
        \draw[-Latex] (0,-0.2) -- (0,1.5);

        \draw (A)++(0,-0.1) -- ($(A)+(0,0.1)$);
        \draw (B)++(0,-0.1) -- ($(B)+(0,0.1)$);

        \draw (C)++(0,-0.1) -- ($(C)+(0,0.1)$);
        \draw (D)++(0,-0.1) -- ($(D)+(0,0.1)$);

        \draw (-0.1,1) -- (0.1,1);
        \node[above] at (0.2,1) {$1$};

        \draw[thick] (D)++(0,1) -- ($(C)+(0,1)$);

        \draw[thick] (D)++(0,1) to[out=180, in=0] (B);
        \draw[thick] (C)++(0,1) to[out=0, in=180] (A);

        \draw[thick] (A)++(0.5,0) -- (A);
        \draw[thick] (B)++(-0.5,0) -- (B);

        \node[shift={(0.3,0)},below] at (A) {$2\norm{a}_{\infty}$};

        \node[shift={(-0.3,0)},below] at (C) {$\nicefrac{3}{2}\norm{a}_{\infty}$};

        \node[above] at ($(D) + (-0.2,1)$) {$\chi$};
      \end{tikzpicture}
    \end{aligned}
  \end{equation*}
  By definition $\hat{f} = f \circ k^{-1}$ is in $\soboH^{1}(U)$ and since $f$ has compact support in $\Gamma'$ we conclude $\hat{f} \in \cH^{1}(U)$ with support in $U' \coloneqq k(\Gamma')$
  Note that we can extend $\hat{f} \in \cH^{1}(U)$ on $\R^{d}$ by $0$. We define
  \begin{equation*}
    F(\zeta) = \chi(v \cdot (\zeta - p)) \hat{f}\big(W\trans (\zeta - p)\big) \quad\text{for}\quad \zeta \in \R^{d}
  \end{equation*}
  The support of $F$ is inside of a rotated and translated
  version of $U' \times \supp \chi$, in particular
  \begin{equation*}
    \supp F \subseteq p + \begin{bmatrix} W & v\end{bmatrix} (U' \times \supp \chi) \eqqcolon \Xi.
  \end{equation*}

  \begin{figure}[h]
    \centering
    \begin{tikzpicture}
      \colorlet{mygreen}{green!40!black}
      \coordinate (p) at (0,0);
      \coordinate (leftp) at (-1.5,-1);
      \coordinate (rightp) at (1.5,1);

      \coordinate (leftsupp) at (-0.75,-0.6);
      \coordinate (rightsupp) at (1,0.7);

      \coordinate (height) at (0,2.5);

      \tikzstyle{transformation} = []

      \draw ([transformation]$(leftp|-p) - (height)$) -- ([transformation]$(leftp|-p) + (height)$) -- ([transformation]$(rightp|-p) + (height)$) -- ([transformation]$(rightp|-p) - (height)$) -- cycle;

      \draw ([transformation]$(p-|leftp) - (0.5,0)$) -- ([transformation]$(p-|rightp) + (0.5,0)$);
      \filldraw ([transformation]p) circle (0.05);
      \node[above,xshift=4] at ([transformation]p) {$0$};

      \draw[transformation]
      ([transformation]leftp)
      to ([transformation]leftsupp)
      to[out=0,in=210] ([transformation]p)
      to[out=-10,in=230] ([transformation]rightsupp)
      to[out=50,in=170] ([transformation]rightp);

      \draw[very thick,mygreen] ($(leftsupp|-p)$) -- ($(rightsupp|-p)$);
      \node[below,mygreen] at ($(p) + (0.6,0)$) {$\supp \hat{f}$};

      \draw[very thick,blue] ($(p) + (height)$) -- ($(p) - (height)$);
      \node[blue,left] at ($(p) + 0.5*(height)$) {$\supp \chi$};

      \tikzstyle{transformation} = [shift={(6.5,1.5)},rotate=-45]

      \draw[transformation]
      ([transformation]leftp)
      to ([transformation]leftsupp)
      to[out=0,in=210] ([transformation]p)
      to[out=-10,in=230] ([transformation]rightsupp)
      to[out=50,in=170] ([transformation]rightp)
      to[out=-10,in=100] (5,-1)
      to[out=240,in=10] (3.5,-2.5)
      to[out=230,in=0] (1,-4) -- (-4,-2) --(-3,0) -- cycle;

      \node at ([transformation]$0.4*(p) + 0.7*(3.5,-2.5)$) {$\Omega$};
      \node[left] at ([transformation]$(1,-4)$) {$\partial\Omega$};

      \draw[very thick,mygreen,transformation]
      ([transformation]leftsupp)
      to[out=0,in=210] ([transformation]p)
      to[out=-10,in=230] ([transformation]rightsupp);

      \filldraw ([transformation]p) circle (0.05);
      \node[above] at ([transformation]p) {$p$};

      \node[right,mygreen] at ($([transformation]p) + (0.15,0.1)$) {$\supp f$};

      \draw ([transformation]$(leftp|-p) - (height)$) -- ([transformation]$(leftp|-p) + (height)$) -- ([transformation]$(rightp|-p) + (height)$) -- ([transformation]$(rightp|-p) - (height)$) -- cycle;
      \node[right] at ([transformation]$(rightp|-p) + (height)$) {$C_{\epsilon,h}(p)$};

      \draw ([transformation]$(p-|leftp) - (0.5,0)$) -- ([transformation]$(p-|rightp) + (0.5,0)$);
      \node[right] at ([transformation]$(p-|rightp) + (0.5,0)$) {$W$};

      \draw[very thick,blue] ([transformation]$(p) + (height)$) -- ([transformation]$(p) - (height)$);
      \node[blue,left] at ([transformation]$(p) + 0.5*(height)$) {``$\supp \chi$''};

    \end{tikzpicture}
    \caption{Illustration of the construction of \Cref{le:extend-from-boundary-to-volume}}%
    \label{fig:illustration-of-extension}
  \end{figure}
  Note that by construction of $\chi$ we have $\supp F \cap \partial\Omega \subseteq \Gamma'$, therefore $F \big\vert_{\partial\Omega \setminus \Gamma'} = 0$.
  Since $\hat{f} \in \soboH^{1}(\R^{d-1})$ it is straight forward that $F$ possess $\Lp{2}(\R^{d})$ directional derivatives in $W$ directions. Moreover, by construction (and the Leibniz product rule) $\tfrac{\partial}{\partial v} F = \chi' \hat{f}(W\trans (\argdot-p))$, which implies $F \in \soboH^{1}(\R^{d})$.
  By definition of a Lipschitz chart we have $\abs{v \cdot (\zeta - p)} \leq \norm{a}_{\infty}$ for $\zeta \in \Gamma$ and hence
  \[
    F(\zeta) = \underbrace{\chi(v  \cdot (\zeta - p))}_{=\mathrlap{1}}\hat{f}(W\trans (\zeta - p)) = \hat{f} \circ k(\zeta) = f(\zeta)
    \quad\text{for}\quad \zeta \in \Gamma
  \]
  (a.e.\ w.r.t.\ the surface measure).

  By assumption on $\hat{f}$ there exists a sequence $(\hat{f}_{n})_{n\in\N}$ in $\Cc(U)$ that converges to $\hat{f}$ w.r.t.\ $\norm{\argdot}_{\soboH^{1}(U)}$. Note that $\hat{f}_{n}$ is also in $\Cc(\R^{d-1})$.
  We define
  \begin{align*}
    F_{n}(\zeta) = \chi(v \cdot (\zeta - p)) \hat{f}_{n}\big(W\trans (\zeta - p)\big)  \quad\text{for}\quad \zeta \in \R^{d}
  \end{align*}
  Note that $F_{n}$ is the composition of $\conC^{\infty}$ mappings and therefore in $\conC^{\infty}(\R^{d})$. Again, the support of $F_{n}$ is contained in the bounded set
  $\Xi$
  and therefore compact, which implies $F_{n} \in \Cc(\R^{d})$.
  Note that $F_{n} \circ k^{-1} = \hat{f}_{n}$, which implies $(F_{n} \circ k^{-1})_{n\in\N}$ converges to $\hat{f}$ w.r.t.\ $\norm{\argdot}_{\soboH^{1}(U)}$. Since $F_{n} \big\vert_{\partial\Omega \setminus \Gamma} = 0 = F \big\vert_{\partial\Omega \setminus \Gamma}$ we conclude $F_{n}\big\vert_{\partial\Omega} \to F \big\vert_{\partial\Omega}$ in $\soboH^{1}(\partial\Omega)$.
  Finally,
  \begingroup%
  \newcommand{\tmpEq}{%
    \norm{F_{n} - F}_{\soboH^{1}(\R^{3})} \leq \norm{\chi'}_{\infty}\norm{(\hat{f}_{n} - \hat{f})(W\trans(\argdot - p))}_{\soboH^{1}(\Xi)}
    \\%
    \leq 2\norm{a}_{\infty}\norm{\chi'}_{\infty} \norm{\hat{f}_{n} - \hat{f}}_{\soboH^{1}(U)} \to 0}%
    \begin{multline*}
      \tmpEq
    \end{multline*}
  \endgroup%
  finishes the proof.
\end{proof}

We will formulate a generalization of \cite[2.~Preliminaries]{BeBeCoDa97}.

\begin{theorem}\label{th:test-functions-dense-in-W}
  Let $\Omega \subseteq \R^{d}$ be a bounded Lipschitz domain and $\Gamma \subseteq \partial\Omega$ open (in $\partial\Omega$). Then
  $\Cc_{\Gamma}(\R^{d})$ is dense in $W(\Omega) \cap \cH^{1}_{\Gamma}(\Omega)$ w.r.t.\ $\norm{\argdot}_{W(\Omega)}$.
\end{theorem}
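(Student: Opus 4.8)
The plan is to split a given $f \in W(\Omega) \cap \cH^{1}_{\Gamma}(\Omega)$ as $f = F + h$, where $F$ carries the boundary trace of $f$ and is assembled from the local extensions furnished by \Cref{le:extend-from-boundary-to-volume}, while $h \coloneqq f - F$ has vanishing \emph{full} trace on $\partial\Omega$; each of the two pieces is then approximated separately by elements of $\Cc_{\Gamma}(\R^{d})$. The analytic substance is already contained in \Cref{le:extend-from-boundary-to-volume}; the remaining work is a localization in which one must take care that every local object stays supported away from $\Gamma$.

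First I would look at the trace $g \coloneqq f\big\vert_{\partial\Omega} \in \soboH^{1}(\partial\Omega)$. Since $f \in \cH^{1}_{\Gamma}(\Omega)$ is an $\soboH^{1}(\Omega)$-limit of functions from $\Cc_{\Gamma}(\Omega)$, whose traces vanish near $\Gamma$, continuity of the trace operator forces $g = 0$ a.e.\ on $\Gamma$; as $\Gamma$ is open in $\partial\Omega$, this means $\supp g \subseteq \partial\Omega \setminus \Gamma$, a compact subset of $\partial\Omega$. Cover $\partial\Omega \setminus \Gamma$ by finitely many chart domains $\Gamma_{1},\dots,\Gamma_{N}$ of $\partial\Omega$ (available at every boundary point since $\Omega$ is Lipschitz) and choose a subordinate partition of unity $\varphi_{1},\dots,\varphi_{N} \in \Cc(\R^{d})$ with $\supp(\varphi_{j}\big\vert_{\partial\Omega})$ compact in $\Gamma_{j}$ and $\sum_{j}\varphi_{j} \equiv 1$ on $\partial\Omega \setminus \Gamma$, so that $g = \sum_{j}\varphi_{j}g$ in $\soboH^{1}(\partial\Omega)$ and each $\varphi_{j}g \in \soboH^{1}(\partial\Omega)$ has compact support in $\Gamma_{j}' \coloneqq \Gamma_{j} \cap (\partial\Omega \setminus \Gamma)$, which is open in $\partial\Omega$ and contained in $\Gamma_{j}$. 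Applying \Cref{le:extend-from-boundary-to-volume} to $\varphi_{j}g$ (with the chart $k\big\vert_{\Gamma_{j}}$ and $\Gamma' \coloneqq \Gamma_{j}'$) produces $F_{j} \in \soboH^{1}(\R^{d}) \cap W(\Omega) \cap \cH^{1}_{\partial\Omega \setminus \Gamma_{j}'}(\Omega)$ with $F_{j}\big\vert_{\partial\Omega} = \varphi_{j}g$, together with a sequence in $\Cc_{\partial\Omega \setminus \Gamma_{j}'}(\R^{d})$ converging to $F_{j}$, in particular with respect to $\norm{\argdot}_{W(\Omega)}$. Since $\Gamma_{j}' \subseteq \partial\Omega \setminus \Gamma$, monotonicity in the boundary label gives $\cH^{1}_{\partial\Omega \setminus \Gamma_{j}'}(\Omega) \subseteq \cH^{1}_{\Gamma}(\Omega)$ and $\Cc_{\partial\Omega \setminus \Gamma_{j}'}(\R^{d}) \subseteq \Cc_{\Gamma}(\R^{d})$. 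Thus $F \coloneqq \sum_{j=1}^{N}F_{j}$ lies in $W(\Omega) \cap \cH^{1}_{\Gamma}(\Omega)$, satisfies $F\big\vert_{\partial\Omega} = g = f\big\vert_{\partial\Omega}$, and is a $\norm{\argdot}_{W(\Omega)}$-limit of $\Cc_{\Gamma}(\R^{d})$-functions.

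Next I would treat $h \coloneqq f - F$. It lies in the linear subspace $W(\Omega) \cap \cH^{1}_{\Gamma}(\Omega)$ and has $h\big\vert_{\partial\Omega} = g - g = 0$; for a bounded Lipschitz domain a vanishing full trace forces $h \in \cH^{1}_{\partial\Omega}(\Omega) = \cl[\soboH^{1}(\Omega)]{\Cc(\Omega)}$, so there are $\psi_{n} \in \Cc(\Omega)$ with $\psi_{n} \to h$ in $\soboH^{1}(\Omega)$. Extended by $0$, each $\psi_{n}$ belongs to $\Cc_{\Gamma}(\R^{d})$ (its support is compact in $\Omega$, hence at positive distance from $\partial\Omega \supseteq \Gamma$), and since all traces occurring here vanish, $\norm{h - \psi_{n}}_{W(\Omega)} = \norm{h - \psi_{n}}_{\soboH^{1}(\Omega)} \to 0$.

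Finally, combining the two approximations, if $G_{n} \in \Cc_{\Gamma}(\R^{d})$ approximates $F$ and $\psi_{n} \in \Cc_{\Gamma}(\R^{d})$ approximates $h$ in $\norm{\argdot}_{W(\Omega)}$, then $G_{n} + \psi_{n} \in \Cc_{\Gamma}(\R^{d})$ converges to $F + h = f$ in $\norm{\argdot}_{W(\Omega)}$, which is the claim. I expect the genuine difficulty to be entirely absorbed by \Cref{le:extend-from-boundary-to-volume}; in the present argument the only point that needs care is the localization, where all local extensions $F_{j}$ and their approximating test functions must be supported away from $\Gamma$ — this is precisely why one works with $\Gamma_{j}' = \Gamma_{j} \cap (\partial\Omega \setminus \Gamma)$ rather than with the full chart domain $\Gamma_{j}$, and also why no Lipschitz regularity of $\Gamma$ itself is needed here.
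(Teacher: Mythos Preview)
Your proof is correct and follows essentially the same strategy as the paper's: localize the boundary trace via a partition of unity, lift each localized piece with \Cref{le:extend-from-boundary-to-volume}, and approximate the zero-trace remainder from $\Cc(\Omega)$. One small slip (harmless for the argument, and the paper is equally loose here): the set $\Gamma_j' = \Gamma_j \cap (\partial\Omega \setminus \Gamma)$ is in general \emph{not} open in $\partial\Omega$, but \Cref{le:extend-from-boundary-to-volume} does not require $\Gamma'$ to be open, and the inclusion $\Cc_{\partial\Omega \setminus \Gamma_j'}(\R^d) \subseteq \Cc_{\Gamma}(\R^d)$ that you actually need follows directly from $\Gamma \subseteq \partial\Omega \setminus \Gamma_j'$.
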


\begin{proof}
  Since $\Omega$ is a Lipschitz domain we find for every $p \in \partial\Omega$ a cylinder $C_{\epsilon,h}(p)$ ($\epsilon$ and $h$ depend on $p$) and a Lipschitz chart $k \colon \partial\Omega \cap C_{\epsilon,h}(p) \to \ball_{\epsilon}(0) \subseteq \R^{d-1}$.

  Hence we can cover $\partial \Omega$ by $\bigcup_{p \in \partial\Omega} C_{\epsilon,h}(p)$ and by compactness of $\partial\Omega$ there are already finitely many $p_{i}$, $i \in \set{1,\dots m}$ such that
  \begin{equation*}
    \partial\Omega \subseteq \bigcup_{i=1}^{m} \underbrace{C_{\epsilon_{i},h_{i}}(p_{i})}_{\eqqcolon \mathrlap{C_{i}}}
  \end{equation*}
  We use the short notation $C_{i} \coloneqq C_{\epsilon_{i},h_{i}}(p_{i})$.
  We employ a partition of unity and obtain $(\alpha_{i})_{i=1}^{m}$, subordinate to this cover, i.e.,
  \begin{equation*}
    \alpha_{i} \in \Cc(C_{i}), \quad \alpha_{i}(\zeta) \in [0,1], \quad\text{and}\quad \sum_{i=1}^{m} \alpha_{i}(\zeta) = 1
    \quad\text{for all}\quad \zeta \in \partial\Omega.
  \end{equation*}
  For $f \in W(\Omega) \cap \cH^{1}_{\Gamma}(\Omega)$ we define $f_{i} \coloneqq \alpha_{i} f$. It is straightforward to show $f_{i} \in W(\Omega) \cap \cH^{1}_{\Gamma}(\Omega)$. For every $C_{i}$ there is a Lipschitz chart $k_{i} \colon \Gamma_{i}\to U_{i} \subseteq \R^{d-1}$, where $\Gamma_{i} = \partial\Omega \cap C_{i}$.
  Moreover, $f_{i} \big\vert_{\partial\Omega}$ has support in $\Gamma_{i} \cap \Gamma^{\complement}$, where $\Gamma^{\complement} = (\partial\Omega \setminus \Gamma)$.

  By \Cref{le:extend-from-boundary-to-volume} there is an $F_{i} \in \soboH^{1}(\R^{d}) \cap W(\Omega) \cap \cH_{\partial\Omega\setminus (\Gamma_{i} \cap \Gamma^{\complement})}^{1}(\Omega)$ such that $F_{i} \big\vert_{\partial\Omega} = f_{i} \big\vert_{\partial\Omega}$ and a sequence $(F_{i,n})_{n\in\N}$ in
  $\Cc_{\partial\Omega \setminus (\Gamma_{i} \cap \Gamma^{\complement})}(\R^{d}) \subseteq \Cc_{\Gamma}(\R^{d})$
  that converges to $F_{i}$ in $\soboH^{1}(\R^{d})$ and in $W(\Omega)$. Hence, we have
  \begin{align*}
    f - \sum_{i=1}^{m} F_{i} = \sum_{i=1}^{m} f_{i} - F_{i} \in \cH^{1}(\Omega),
  \end{align*}
  which can be approximated by $(F_{0,n})_{n\in\N}$ in $\Cc(\Omega)$. Hence, $\big(\sum_{i=0}^{m} F_{i,n}\big)_{n\in\N}$ is a sequence in $\Cc_{\Gamma}(\R^{d})$ and converges to $f$ in $W(\Omega)$.
\end{proof}

\section{The bounded case}

\subsection{Density result with homogeneous part}


In this section we will finally define the Sobolev spaces with homogeneous and $\Lp{2}$ partial tangential traces, respectively. Then we will prove one of the main results for bounded domains.

We will use a weak definition for the tangential trace and twisted tangential trace as, e.g., in \cite{PaSk22}.

\begin{definition}\label{def:weak-tangential-trace}
  Let $\Omega$ be a (possibly unbounded) Lipschitz domain and $\Gamma \subseteq \partial\Omega$ open (in $\partial\Omega$).
  \begin{itemize}[itemsep=1ex]
    \item
    We say $f \in \soboH(\rot,\Omega)$ has a $\Lptan{2}(\Gamma)$ tangential trace, if there exists a $q \in \Lptan{2}(\Gamma)$ such that
    \begin{align*}
      \scprod{f}{\rot \phi}_{\Lp{2}(\Omega)} - \scprod{\rot f}{\phi}_{\Lp{2}(\Omega)} = \scprod{q}{\tanxtr \phi}_{\Lptan{2}(\Gamma)}
      \quad \forall \phi \in \Cc_{\partial\Omega \setminus \Gamma}(\R^{3}).
    \end{align*}
    In this case we say $\tantr f \in \Lptan{2}(\Gamma)$ and $\tantr f = q$ on $\Gamma$ or more precisely $\tantr[\Gamma] f = q$.

    \item
    We say $f \in \soboH(\rot,\Omega)$ has a $\Lptan{2}(\Gamma)$ twisted tangential trace, if there exists a $q \in \Lptan{2}(\Gamma)$ such that
    \begin{align*}
      \scprod{\rot f}{\phi}_{\Lp{2}(\Omega)} - \scprod{f}{\rot \phi}_{\Lp{2}(\Omega)} = \scprod{q}{\tantr \phi}_{\Lptan{2}(\Gamma)}
      \quad \forall \phi \in \Cc_{\partial\Omega \setminus \Gamma}(\R^{3}).
    \end{align*}
    In this case we say $\tanxtr f \in \Lptan{2}(\Gamma)$ and $\tanxtr f = q$ on $\Gamma$ or more precisely $\tanxtr[\Gamma] f = q$.
  \end{itemize}
\end{definition}

Note that the previous definition does not say anything about $\tantr f$ on $\partial\Omega \setminus \Gamma$. Moreover, the notation $\tantr[\Gamma]$ and $\tanxtr[\Gamma]$ is an abuse of notation, which should indicate that we only look at the part $\Gamma$ of the boundary $\partial\Omega$.

\begin{remark}
  Note that $\nu \times \tanxtr \phi = - \tantr \phi$ and $\scprod{q}{\tanxtr \phi}_{\Lptan{2}(\Gamma)} = \scprod{\nu \times q}{\nu \times \tanxtr \phi}_{\Lptan{2}(\Gamma)}$. Hence, we can easily see that $\tantr f \in \Lp{2}(\Gamma)$ is equivalent to $\tanxtr f \in \Lp{2}(\Gamma)$ and $\tanxtr f = \nu \times \tantr f$.
\end{remark}

\begin{definition}
  Let $\Omega$ be a (possibly unbounded) Lipschitz domain and $\Gamma \subseteq \partial\Omega$ open (in $\partial\Omega$).
  Then we define the space
  \begin{equation*}
    \hH_{\Gamma}(\rot, \Omega) \coloneqq
    \dset{f \in \soboH(\rot,\Omega)}{\tantr f \in \Lptan{2}(\Gamma)}
  \end{equation*}
  with its norm
  \begin{equation*}
    \norm{f}_{\hH_{\Gamma}(\rot, \Omega)} \coloneqq
    \Big(
    \norm{f}_{\Lp{2}(\Omega)}^{2} + \norm{\rot f}_{\Lp{2}(\Omega)}^{2}
    \mathclose{} + \norm{\tantr f}_{\Lp{2}(\Gamma)}^{2}
    \Big)^{\nicefrac{1}{2}}.
  \end{equation*}
  For $\Gamma = \partial\Omega$ we will just write $\hH(\rot,\Omega)$ instead of $\hH_{\partial\Omega}(\rot,\Omega)$.
\end{definition}

\begin{definition}
  Let $\Omega$ be a (possibly unbounded) Lipschitz domain and $\Gamma \subseteq \partial\Omega$ open (in $\partial\Omega$). Then we define the space
  \begin{equation*}
    \cH_{\Gamma}(\rot,\Omega) = \dset{f \in \hH_{\Gamma}(\rot,\Omega)}{\tantr[\Gamma] f = 0}.
  \end{equation*}
  For $\Gamma = \partial\Omega$ we will just write $\cH(\rot,\Omega)$ instead of $\cH_{\partial\Omega}(\rot,\Omega)$.
\end{definition}

In \cite[Thm.~4.5]{BaPaScho16} it is shown (for bounded $\Omega$) that $\Cc_{\Gamma}(\Omega)$ is dense in $\cH_{\Gamma}(\rot,\Omega)$ w.r.t. $\norm{\argdot}_{\soboH(\rot,\Omega)}$, i.e.,
\begin{equation*}
  \cH_{\Gamma}(\rot,\Omega) = \cl[\soboH(\rot,\Omega)]{\Cc_{\Gamma}(\Omega)}.
\end{equation*}
Hence, for homogeneous tangential traces there is already a version of the desired density result.

Note that the hat on top of the $\soboH$ indicates partial $\Lp{2}$ boundary conditions and the circle on top indicates partial homogeneous boundary conditions.

\begin{remark}
  We can immediately see
  \begin{equation*}
    \cH_{\Gamma}(\rot,\Omega) \subseteq \hH_{\Gamma}(\rot,\Omega).
  \end{equation*}
  Since $\tantr f \in \Lp{2}(\Gamma)$ is equivalent to $\tanxtr f \in \Lp{2}(\Gamma)$ we have
  \begin{equation*}
    \hH_{\Gamma}(\rot,\Omega) = \dset{f \in \soboH(\rot,\Omega)}{\tanxtr f \in \Lp{2}(\Gamma)},
  \end{equation*}
  Since $\tantr f = \tanxtr f \times \nu$, we have $\norm{\tantr f }_{\Lp{2}(\Gamma)} = \norm{\tanxtr f}_{\Lp{2}(\Gamma)}$ and
  \begin{equation*}
    \norm{f}_{\hH_{\Gamma}(\rot,\Omega)} =
    \big(
    \norm{f}_{\Lp{2}(\Omega)}^{2} + \norm{\rot f}_{\Lp{2}(\Omega)}^{2}
    + \norm{\tanxtr f}_{\Lp{2}(\Gamma)}^{2}
    \big)^{\nicefrac{1}{2}}.
  \end{equation*}
\end{remark}

\begin{remark}
  Since we use representation in an inner product, one can say that we have defined $\hH_{\Gamma}(\rot,\Omega)$ weakly. Another approach could have been to regard $\cl[\hH_{\Gamma}(\rot,\Omega)]{\Cc(\R^{3})}$, which could be called a strong approach. From this perspective the result we are going to show basically tells us that the weak and the strong approach to $\soboH(\rot,\Omega)$ fields that possess a $\Lptan{2}(\Gamma)$ tangential trace coincide.
\end{remark}


Recall the decomposition \eqref{eq:decomposition-of-cH-Gamma-0} for a Lipschitz pair $(\Omega,\Gamma_{0})$, where $\Omega$ is bounded:
\begin{equation*}
  \cH_{\Gamma_{0}}(\rot,\Omega) = \cH^{1}_{\Gamma_{0}}(\Omega) + \grad \cH_{\Gamma_{0}}^{1}(\Omega)
\end{equation*}
Note that every element in $\cH^{1}_{\Gamma_{0}}(\Omega)$ is already in $\hH(\rot,\Omega) \cap \cH_{\Gamma_{0}}(\rot,\Omega)$.
Moreover, by \cite[Lem.~3.1]{BaPaScho16} $\Cc_{\Gamma_{0}}(\R^{3})$ is dense in $\cH_{\Gamma_{0}}^{1}(\Omega)$ w.r.t.\ $\norm{\argdot}_{\soboH^{1}(\Omega)}$ and therefore also w.r.t.\ $\norm{\argdot}_{\hH(\rot,\Omega)}$.

Hence, it is left to show that every
\[
  f \in \grad \cH^{1}_{\Gamma_{0}}(\Omega) \cap \hH(\rot,\Omega)
\]
can be approximated by a $\Cc_{\Gamma_{0}}(\R^{3})$ function (w.r.t.\ $\norm{\argdot}_{\hH(\rot,\Omega)}$).

The following result seems obvious at first glance, but needs a bit of preparation to be shown. One must resist the temptation to show it for smooth function and conclude it by density, since this would lead to a circular argument. Luckily, it is basically \cite[Thm.~4.2]{Sk25a} and therefore already shown.

\begin{lemma}\label{le:H1-with-L2-tangrad-implies-W}
  Let $\Omega$ be a bounded Lipschitz domain, $\Gamma_{0} \subseteq \partial\Omega$ open and $f \in \cH_{\Gamma_{0}}^{1}(\Omega)$ such that $\grad f \in \hH(\rot,\Omega)$ (in particular $\tantr \grad f \in \Lptan{2}(\partial\Omega)$). Then $\tantr \grad f = \tangrad f\big\vert_{\partial\Omega}$ and $f \in W(\Omega) \cap \cH_{\Gamma_{0}}^{1}(\Omega)$.
\end{lemma}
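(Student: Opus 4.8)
The assertion splits into a boundary-regularity part — that $f\big\vert_{\partial\Omega}$ lies in $\soboH^{1}(\partial\Omega)$ with surface gradient $\tantr\grad f$ — and the bookkeeping part $f\in\cH^{1}_{\Gamma_{0}}(\Omega)$, which is simply the hypothesis and is untouched by the argument. So the plan is to establish $f\big\vert_{\partial\Omega}\in\soboH^{1}(\partial\Omega)$ and $\tangrad(f\big\vert_{\partial\Omega})=\tantr\grad f$; then $f\in\soboH^{1}(\Omega)$ together with $f\big\vert_{\partial\Omega}\in\soboH^{1}(\partial\Omega)$ gives $f\in W(\Omega)$ by definition, and intersecting with the given $f\in\cH^{1}_{\Gamma_{0}}(\Omega)$ concludes.

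For the boundary regularity one must resist, as the preceding remark warns, the temptation to prove the identity for smooth $f$ and pass to the limit: approximating $f$ in $\soboH^{1}(\Omega)$ only controls its $\soboH^{1/2}$ trace, not the $\Lp{2}$ surface gradient, so such a scheme is circular. Instead I would invoke the weak characterization of $\soboH^{1}$ on a Lipschitz boundary from \cite[Thm.~4.2]{Sk25a}: a scalar $g\in\Lp{2}(\partial\Omega)$ belongs to $\soboH^{1}(\partial\Omega)$ with weak surface gradient $h\in\Lptan{2}(\partial\Omega)$ exactly when a surface integration-by-parts identity holds, pairing $g$ against the scalar surface curl of tangential test fields and $h$ against those same fields.

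The bridge to the hypothesis is the elementary identity $\scprod{\grad f}{\rot\phi}_{\Lp{2}(\Omega)}=\int_{\partial\Omega}(f\big\vert_{\partial\Omega})\,(\nu\cdot\rot\phi)\dx[S]$ for $\phi\in\Cc(\R^{3})$: it holds for smooth $f$ because $\div(f\,\rot\phi)=\grad f\cdot\rot\phi$, and extends to $f\in\soboH^{1}(\Omega)$ by density of $\conC^{\infty}(\cl{\Omega})$ and continuity of the trace into $\Lp{2}(\partial\Omega)$ (note $\nu\cdot\rot\phi\big\vert_{\partial\Omega}\in\Lp{\infty}(\partial\Omega)$). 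On the other hand, since $\rot\grad f=0$ and $\grad f\in\hH(\rot,\Omega)$, \Cref{def:weak-tangential-trace} reads $\scprod{\grad f}{\rot\phi}_{\Lp{2}(\Omega)}=\scprod{\tantr\grad f}{\tanxtr\phi}_{\Lptan{2}(\partial\Omega)}$. Comparing the two, using that $\nu\cdot\rot\phi\big\vert_{\partial\Omega}$ is the scalar surface curl of $\tanxtr\phi$ and rewriting $\scprod{\tantr\grad f}{\tanxtr\phi}$ as an integral of $\tantr\grad f\times\nu$ against the tangential part of $\phi$ via $(a\times b)\cdot c=a\cdot(b\times c)$ and $\nu\times\tantr\phi=\tanxtr\phi$, one arrives precisely at the weak-surface-gradient identity of \cite[Thm.~4.2]{Sk25a} with $g=f\big\vert_{\partial\Omega}$ and $h=\tantr\grad f$, whence the claim.

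The main obstacle — and the reason this ``obvious'' statement needs preparation — is that \Cref{def:weak-tangential-trace} tests against restrictions $\tanxtr\phi,\tantr\phi$ of global $\Cc(\R^{3})$ fields, while the intrinsic weak description of $\soboH^{1}(\partial\Omega)$ uses smooth tangential fields on the merely Lipschitz surface; one needs these restricted fields to be dense among the latter in a topology strong enough to characterize $\soboH^{1}(\partial\Omega)$, and this is exactly the content that \cite[Thm.~4.2]{Sk25a} supplies, so in the write-up I would lean on that theorem rather than redo it. Everything else — the vector identities above and the surface-curl computation for $\nu\cdot\rot\phi\big\vert_{\partial\Omega}$ — is routine.
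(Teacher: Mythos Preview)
Your proposal is correct and follows essentially the same approach as the paper: both reduce the claim to \cite[Thm.~4.2]{Sk25a}. The paper's proof is in fact even terser than yours --- it simply observes that $\tantr\grad f\in\Lp{2}(\partial\Omega)$ and cites \cite[Thm.~4.2]{Sk25a} directly for the conclusion $f\big\vert_{\partial\Omega}\in\soboH^{1}(\partial\Omega)$ with $\tangrad f\big\vert_{\partial\Omega}=\tantr\grad f$, leaving the divergence-theorem bridge and the vector identities you spell out implicit in that citation.
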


\begin{proof}
  Since $\grad f \in \hH(\rot,\Omega)$, we know that $\tantr \grad f \in \Lp{2}(\partial\Omega)$ which implies
  $f \big\vert_{\partial\Omega} \in \soboH^{1}(\partial\Omega)$ and $\tangrad f \big\vert_{\partial\Omega} = \tantr \grad f$, see \cite[Thm.~4.2]{Sk25a}.
  Therefore, we conclude $f \in W(\Omega)$.
\end{proof}

This brings us to our first main theorem.

\begin{theorem}\label{th:Cc-gamma0-dense-in-hH-cap-cH-gamma0}
  Let $\Omega$ be bounded and $(\Omega,\Gamma_{0})$ a Lipschitz pair.
  Then
  $\Cc_{\Gamma_{0}}(\R^{3})$ is dense in $\hH(\rot,\Omega) \cap \cH_{\Gamma_{0}}(\rot,\Omega)$ w.r.t.\ $\norm{\argdot}_{\hH(\rot,\Omega)}$.
\end{theorem}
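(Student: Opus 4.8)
The plan is to run the strategy outlined in the introduction: split an arbitrary field of $\hH(\rot,\Omega) \cap \cH_{\Gamma_{0}}(\rot,\Omega)$ along the decomposition \eqref{eq:decomposition-of-cH-Gamma-0} into an $\soboH^{1}$ summand and a gradient summand, approximate the former with the known $\soboH^{1}$ density result, and reduce the latter to the $W(\Omega)$ density of \Cref{th:test-functions-dense-in-W}.

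First I would intersect \eqref{eq:decomposition-of-cH-Gamma-0} with $\hH(\rot,\Omega)$. Because $\cH^{1}_{\Gamma_{0}}(\Omega) \subseteq \hH(\rot,\Omega) \cap \cH_{\Gamma_{0}}(\rot,\Omega)$ (an $\soboH^{1}$ field has an $\Lp{2}(\partial\Omega)$ trace, hence an $\Lp{2}$ tangential trace), a decomposition $f = f_{0} + \grad u$ with $f_{0}, u \in \cH^{1}_{\Gamma_{0}}(\Omega)$ satisfies $f \in \hH(\rot,\Omega)$ iff $\grad u = f - f_{0} \in \hH(\rot,\Omega)$, so
\[
  \hH(\rot,\Omega) \cap \cH_{\Gamma_{0}}(\rot,\Omega)
  = \cH^{1}_{\Gamma_{0}}(\Omega) + \bigl(\hH(\rot,\Omega) \cap \grad \cH^{1}_{\Gamma_{0}}(\Omega)\bigr),
\]
and it suffices to approximate each summand by $\Cc_{\Gamma_{0}}(\R^{3})$ fields in $\norm{\argdot}_{\hH(\rot,\Omega)}$. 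For $f_{0} \in \cH^{1}_{\Gamma_{0}}(\Omega)$ I would invoke \cite[Lem.~3.1]{BaPaScho16} to get $v_{n} \in \Cc_{\Gamma_{0}}(\R^{3})$ with $v_{n} \to f_{0}$ in $\norm{\argdot}_{\soboH^{1}(\Omega)}$; since $\norm{w}_{\hH(\rot,\Omega)} \le c\,\norm{w}_{\soboH^{1}(\Omega)}$ for vector fields $w$ — the $\Lp{2}(\Omega)$ and $\rot$ parts are obvious, and $\norm{\tantr w}_{\Lp{2}(\partial\Omega)} \le \norm{w\vert_{\partial\Omega}}_{\Lp{2}(\partial\Omega)}$ is controlled by the scalar trace theorem — one obtains $v_{n} \to f_{0}$ also in $\hH(\rot,\Omega)$.

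The core of the proof is the gradient summand. Fix $f = \grad u$ with $u \in \cH^{1}_{\Gamma_{0}}(\Omega)$ and $\grad u \in \hH(\rot,\Omega)$. By \Cref{le:H1-with-L2-tangrad-implies-W} this forces $u \in W(\Omega) \cap \cH^{1}_{\Gamma_{0}}(\Omega)$ and identifies $\tantr \grad u = \tangrad u\vert_{\partial\Omega}$. Then \Cref{th:test-functions-dense-in-W} yields $u_{n} \in \Cc_{\Gamma_{0}}(\R^{3})$ with $u_{n} \to u$ in $\norm{\argdot}_{W(\Omega)}$, that is $u_{n} \to u$ in $\soboH^{1}(\Omega)$ and $u_{n}\vert_{\partial\Omega} \to u\vert_{\partial\Omega}$ in $\soboH^{1}(\partial\Omega)$. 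I would then observe that $\grad u_{n} \in \Cc_{\Gamma_{0}}(\R^{3})$ (as $\supp \grad u_{n} \subseteq \supp u_{n}$ stays positively distant from $\Gamma_{0}$) and that $\grad u_{n} \to \grad u$ in $\hH(\rot,\Omega)$: the $\Lp{2}(\Omega)$ convergence follows from $u_{n} \to u$ in $\soboH^{1}(\Omega)$; $\rot \grad u_{n} = 0 = \rot \grad u$; and for the tangential trace the classical pointwise identity $\tantr \grad u_{n} = (\nu \times \grad u_{n}\vert_{\partial\Omega}) \times \nu = \tangrad u_{n}\vert_{\partial\Omega}$ for the smooth field $u_{n}$ (consistent with \Cref{def:weak-tangential-trace} by an elementary integration by parts) combined with the continuity of $\tangrad \colon \soboH^{1}(\partial\Omega) \to \Lptan{2}(\partial\Omega)$ (built into the chart-wise definition of $\soboH^{1}(\partial\Omega)$) gives $\tantr \grad u_{n} = \tangrad u_{n}\vert_{\partial\Omega} \to \tangrad u\vert_{\partial\Omega} = \tantr \grad u$ in $\Lp{2}(\partial\Omega)$.

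Finally I would assemble: for $f \in \hH(\rot,\Omega) \cap \cH_{\Gamma_{0}}(\rot,\Omega)$ written as $f = f_{0} + \grad u$, the sequence $\phi_{n} \coloneqq v_{n} + \grad u_{n} \in \Cc_{\Gamma_{0}}(\R^{3})$ converges to $f$ in $\norm{\argdot}_{\hH(\rot,\Omega)}$. The main obstacle is the gradient summand, but it is in fact already handled by the two prepared tools: \Cref{le:H1-with-L2-tangrad-implies-W} upgrades the $\Lp{2}$ tangential-trace regularity of $\grad u$ to $\soboH^{1}$ boundary regularity of $u$ (membership in $W(\Omega)$) and simultaneously pins down the trace, and \Cref{th:test-functions-dense-in-W} provides the $W(\Omega)$ density. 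What remains is bookkeeping: stability of $\Cc_{\Gamma_{0}}(\R^{3})$ under $\grad$, and the routine check that the classical tangential trace of a smooth field coincides with the weak trace of \Cref{def:weak-tangential-trace}.
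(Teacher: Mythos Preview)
Your proof is correct and follows essentially the same route as the paper's: decompose via \eqref{eq:decomposition-of-cH-Gamma-0}, approximate the $\cH^{1}_{\Gamma_{0}}(\Omega)$ summand using \cite[Lem.~3.1]{BaPaScho16}, and handle the gradient summand by upgrading to $W(\Omega)$ regularity through \Cref{le:H1-with-L2-tangrad-implies-W} and then invoking \Cref{th:test-functions-dense-in-W}. You spell out a few intermediate checks (e.g., that $\grad u_{n} \in \Cc_{\Gamma_{0}}(\R^{3})$ and the consistency of the weak and pointwise tangential traces for smooth fields) that the paper leaves implicit, but the argument is the same.
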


\begin{proof}
  Let $f \in \hH(\rot,\Omega) \cap \cH_{\Gamma_{0}}(\rot,\Omega)$ be arbitrary. Then we can decompose $f$ into $f = f_{1} + \grad f_{2}$, where $f_{1} \in \cH_{\Gamma_{0}}^{1}(\Omega)$ and $f_{2} \in \cH_{\Gamma_{0}}^{1}(\Omega)$. Note that $f \in \hH(\rot,\Omega)$ and $f_{1} \in \hH(\rot,\Omega)$, which implies $\grad f_{2} \in \hH(\rot,\Omega) \cap \grad \cH_{\Gamma_{0}}^{1}(\Omega)$.

  By \cite[Lem.~3.1]{BaPaScho16} $f_{1}$ can be approximated by $\Cc_{\Gamma_{0}}(\R^{3})$ functions w.r.t.\ $\norm{\argdot}_{\soboH^{1}(\Omega)}$ and therefore also w.r.t.\ $\norm{\argdot}_{\hH(\rot,\Omega)}$.

  By \Cref{le:H1-with-L2-tangrad-implies-W} we know that $f_{2} \in W(\Omega) \cap \cH^{1}_{\Gamma_{0}}(\Omega)$. Hence, we can apply \Cref{th:test-functions-dense-in-W} and obtain a sequence $(f_{2,n})_{n\in\N}$ that converges to $f_{2}$ w.r.t.\ $\norm{\argdot}_{W(\Omega)}$. This gives
  \begin{align*}
    \MoveEqLeft
    \norm{\grad f_{2} - \grad f_{2,n}}_{\hH(\rot,\Omega)}^{2} \\
    &= \norm{\grad (f_{2} - f_{2,n})}^{2}_{\Lp{2}(\Omega)}
      + \norm{\underbrace{\rot \grad (f_{2} - f_{2,n})}_{=\mathrlap{0}}}^{2}_{\Lp{2}(\Omega)}
      + \norm{\tantr \grad (f_{2} - f_{2,n})}^{2}_{\Lp{2}(\partial\Omega)} \\
    &\leq \norm{f_{2} - f_{2,n}}_{\soboH^{1}(\Omega)}^{2}
      + \norm[\big]{f_{2} \big\vert_{\partial\Omega} - f_{2,n} \big\vert_{\partial\Omega}}_{\soboH^{1}(\partial\Omega)}^{2} \\
    &= \norm{f_{2} - f_{2,n}}_{W(\Omega)}^{2} \to 0,
  \end{align*}
  which finishes the proof.
\end{proof}

\subsection{Density result without homogeneous part}%
\label{sec:density-without-homogeneous-part}

Since we already know that $\Cc_{\Gamma_{0}}(\R^{3})$ is dense in $\hH(\rot,\Omega) \cap \cH_{\Gamma_{0}}(\rot,\Omega)$, we can show the density of $\Cc(\R^{3})$ in $\hH_{\Gamma_{1}}(\rot,\Omega)$ by a duality argument, which we will present in this section.
This argument can be done in just a few lines within the notion of quasi Gelfand triples \cite{Sk24}. However, in order to stay relatively elementary we extract the essence and build a proof that avoids the introduction of this notion.

Basically we mimic the abstract boundary space for the tangential trace by $\cH(\rot,\Omega)^{\perp}$, which can also be viewed as the boundary space as it is isometrically isomorphic.

Our standing assumption in this section is that $\Omega$ is bounded, $(\Omega,\Gamma_{0})$ is a Lipschitz pair and $\Gamma_{1} \coloneqq \partial\Omega \setminus \cl{\Gamma_{0}}$.

Note that the boundedness of $\Omega$ is only necessary to apply \Cref{th:Cc-gamma0-dense-in-hH-cap-cH-gamma0}. However, we will later be able replace this by \Cref{th:Cc-gamma0-dense-in-hH-cap-cH-gamma0-unbounded}.

\begin{corollary}\label{th:characterization-of-hH-Gamma-with-closure-of-test-functions}
  If $f \in \hH_{\Gamma_{1}}(\rot,\Omega)$, then
  \begin{align*}
    \scprod{\tanxtr f}{\tantr g}_{\Lp{2}(\Gamma_{1})} = \scprod{\rot f}{g}_{\Lp{2}(\Omega)} - \scprod{f}{\rot g}_{\Lp{2}(\Omega)}
  \end{align*}
  for all $g \in \hH(\rot,\Omega) \cap \cH_{\Gamma_{0}}(\rot,\Omega)$.
\end{corollary}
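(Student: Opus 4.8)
The plan is to establish the identity first for smooth test functions $g \in \Cc_{\Gamma_{0}}(\R^{3})$, where it is nothing but the defining property of the weak trace, and then to propagate it to every $g \in \hH(\rot,\Omega) \cap \cH_{\Gamma_{0}}(\rot,\Omega)$ by \Cref{th:Cc-gamma0-dense-in-hH-cap-cH-gamma0} together with continuity of the bilinear forms involved.

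First I would observe that $\Cc_{\Gamma_{0}}(\R^{3}) = \Cc_{\partial\Omega \setminus \Gamma_{1}}(\R^{3})$, since $\partial\Omega \setminus \Gamma_{1} = \cl{\Gamma_{0}}$ and $\dist(\cl{\Gamma_{0}},\supp\phi) = \dist(\Gamma_{0},\supp\phi)$. As $f \in \hH_{\Gamma_{1}}(\rot,\Omega)$, the Remark after \Cref{def:weak-tangential-trace} yields $\tanxtr f \in \Lptan{2}(\Gamma_{1})$ with $\tanxtr f = \nu \times \tantr f$, so $f$ possesses a $\Lptan{2}(\Gamma_{1})$ twisted tangential trace, and the second bullet of \Cref{def:weak-tangential-trace} with $\Gamma = \Gamma_{1}$ reads
\[
  \scprod{\rot f}{\phi}_{\Lp{2}(\Omega)} - \scprod{f}{\rot\phi}_{\Lp{2}(\Omega)} = \scprod{\tanxtr f}{\tantr\phi}_{\Lptan{2}(\Gamma_{1})}
  \qquad \forall\, \phi \in \Cc_{\Gamma_{0}}(\R^{3}).
\]
This is exactly the asserted identity in the special case $g = \phi$.

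Next, for an arbitrary $g \in \hH(\rot,\Omega) \cap \cH_{\Gamma_{0}}(\rot,\Omega)$, \Cref{th:Cc-gamma0-dense-in-hH-cap-cH-gamma0} provides a sequence $(\phi_{n})_{n\in\N}$ in $\Cc_{\Gamma_{0}}(\R^{3})$ with $\phi_{n} \to g$ w.r.t.\ $\norm{\argdot}_{\hH(\rot,\Omega)}$; hence $\phi_{n} \to g$ in $\Lp{2}(\Omega)$, $\rot\phi_{n} \to \rot g$ in $\Lp{2}(\Omega)$, and $\tantr\phi_{n} \to \tantr g$ in $\Lptan{2}(\partial\Omega)$, thus also in $\Lptan{2}(\Gamma_{1})$. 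With $f$ (and $\rot f$) held fixed, the left-hand side above converges to $\scprod{\rot f}{g}_{\Lp{2}(\Omega)} - \scprod{f}{\rot g}_{\Lp{2}(\Omega)}$ by continuity of the $\Lp{2}(\Omega)$ inner product, and the right-hand side converges to $\scprod{\tanxtr f}{\tantr g}_{\Lptan{2}(\Gamma_{1})}$ by the Cauchy--Schwarz inequality, using $\tanxtr f \in \Lptan{2}(\Gamma_{1})$. Passing to the limit in the displayed identity gives the claim.

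I do not expect a genuine obstacle here: the corollary is essentially the weak definition of the trace combined with the already proven density \Cref{th:Cc-gamma0-dense-in-hH-cap-cH-gamma0}. The two points deserving a moment's care are the identification $\Cc_{\Gamma_{0}}(\R^{3}) = \Cc_{\partial\Omega \setminus \Gamma_{1}}(\R^{3})$ of the relevant spaces of test functions, and the bookkeeping that $\tanxtr f$ is only known to lie in $\Lptan{2}(\Gamma_{1})$, so all boundary pairings must be restricted to $\Gamma_{1}$ — which causes no trouble because $g \in \cH_{\Gamma_{0}}(\rot,\Omega)$ renders the $\Gamma_{0}$-part of the trace inert anyway.
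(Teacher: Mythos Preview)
Your proof is correct and follows the same approach as the paper: establish the identity on $\Cc_{\Gamma_{0}}(\R^{3})$ from the definition of the weak (twisted) tangential trace, then extend by continuity using the density result \Cref{th:Cc-gamma0-dense-in-hH-cap-cH-gamma0}. You spell out a few details the paper leaves implicit (the identification $\Cc_{\Gamma_{0}}(\R^{3}) = \Cc_{\partial\Omega\setminus\Gamma_{1}}(\R^{3})$ and the passage from $\tantr f$ to $\tanxtr f$), but the argument is the same.
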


\begin{proof}
  For $f \in \hH_{\Gamma_{1}}(\rot,\Omega)$ we have by definition
  \begin{equation*}
    \scprod{\tanxtr f}{\tantr g}_{\Lp{2}(\Gamma_{1})} = \scprod{\rot f}{g}_{\Lp{2}(\Omega)} - \scprod{f}{\rot g}_{\Lp{2}(\Omega)}
  \end{equation*}
  for all $g \in \Cc_{\Gamma_{0}}(\R^{3})$. Since this equation is continuous in $g$ w.r.t.\ $\norm{\argdot}_{\hH(\rot,\Omega)}$, we can extend it by continuity to $g \in \cl[\hH(\rot,\Omega)]{\Cc_{\Gamma_{0}}(\R^{3})}$ and by \Cref{th:Cc-gamma0-dense-in-hH-cap-cH-gamma0} to $g \in \hH(\rot,\Omega) \cap \cH_{\Gamma_{0}}(\rot,\Omega)$.
\end{proof}


\begin{lemma}\label{le:cH-rot-perp}
  We have the following identity
  \begin{equation*}
    \cH(\rot,\Omega)^{\perp} = \dset{f \in \soboH(\rot,\Omega)}{\rot\rot f = -f},
  \end{equation*}
  where the orthogonal complement is taken in $\soboH(\rot,\Omega)$, i.e., w.r.t.\ $\scprod{\argdot}{\argdot}_{\soboH(\rot,\Omega)}$.
  Moreover, $\rot$ leaves the space $\cH(\rot,\Omega)^{\perp}$ invariant.
\end{lemma}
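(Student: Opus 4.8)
The plan is to reduce the orthogonality condition defining $\cH(\rot,\Omega)^{\perp}$ to a distributional identity, using the density statement $\cH(\rot,\Omega) = \cl[\soboH(\rot,\Omega)]{\Cc(\Omega)}$, i.e.\ the case $\Gamma = \partial\Omega$ of \cite[Thm.~4.5]{BaPaScho16} recalled above.

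First I would note that, by this density together with continuity of $\scprod{\argdot}{\argdot}_{\soboH(\rot,\Omega)}$, a field $f \in \soboH(\rot,\Omega)$ lies in $\cH(\rot,\Omega)^{\perp}$ if and only if
\[
  \scprod{f}{\phi}_{\Lp{2}(\Omega)} + \scprod{\rot f}{\rot\phi}_{\Lp{2}(\Omega)} = 0
  \qquad\text{for all }\phi \in \Cc(\Omega).
\]
Since $\rot\phi \in \Cc(\Omega)$, the very definition of the weak curl $\rot f$ (applied to the test field $\rot\phi$) gives $\scprod{\rot f}{\rot\phi}_{\Lp{2}(\Omega)} = \scprod{f}{\rot\rot\phi}_{\Lp{2}(\Omega)}$, so the condition rewrites as $\scprod{f}{\phi + \rot\rot\phi}_{\Lp{2}(\Omega)} = 0$ for all $\phi \in \Cc(\Omega)$. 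Shifting the operator back onto $f$, this says exactly that $\rot\rot f = -f$ holds in the distributional sense. Since $-f \in \Lp{2}(\Omega)$, this identity automatically improves to $\rot\rot f = -f \in \Lp{2}(\Omega)$; in particular $\rot f \in \soboH(\rot,\Omega)$, so the defining equation of the right-hand set is meaningful. Conversely, if $f \in \soboH(\rot,\Omega)$ satisfies $\rot\rot f = -f$, running the same chain of equalities in reverse yields $f \perp \cH(\rot,\Omega)$. This proves the claimed characterization.

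For the invariance, let $f \in \cH(\rot,\Omega)^{\perp}$, i.e.\ $\rot\rot f = -f$. By the first part $\rot f \in \soboH(\rot,\Omega)$, and applying $\rot$ once more (distributional derivatives commute) we get $\rot\rot(\rot f) = \rot(\rot\rot f) = \rot(-f) = -\rot f \in \Lp{2}(\Omega)$. Thus $\rot f$ again satisfies the characterizing equation, hence $\rot f \in \cH(\rot,\Omega)^{\perp}$.

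I do not expect a genuine obstacle. The only subtlety is the bookkeeping between distributions and $\Lp{2}$ functions: one must keep the integration by parts at the level of test fields $\phi \in \Cc(\Omega)$ and only afterwards deduce $\rot f \in \soboH(\rot,\Omega)$ from $\rot\rot f = -f \in \Lp{2}(\Omega)$, rather than presupposing that regularity (which would also risk a circular appeal to smooth approximations).
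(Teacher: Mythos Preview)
Your proposal is correct and follows essentially the same approach as the paper: reduce to test fields $\phi\in\Cc(\Omega)$ via the density $\cH(\rot,\Omega)=\cl[\soboH(\rot,\Omega)]{\Cc(\Omega)}$ and read off the weak identity $\rot\rot f=-f$. The paper skips your intermediate rewriting $\scprod{f}{\phi+\rot\rot\phi}_{\Lp{2}(\Omega)}=0$ and simply observes that $\scprod{\rot f}{\rot\phi}_{\Lp{2}(\Omega)}=-\scprod{f}{\phi}_{\Lp{2}(\Omega)}$ for all $\phi\in\Cc(\Omega)$ is \emph{literally} the statement that $\rot f$ has weak curl $-f\in\Lp{2}(\Omega)$; this is a minor streamlining, not a different idea, and your explicit treatment of the $\rot$-invariance (which the paper leaves implicit) is fine.
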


\begin{proof}
  Note that by density of $\Cc(\Omega)$ in $\cH(\rot,\Omega)$ both spaces have the same orthogonal complement. Hence,
  \begin{align*}
    f \in \cH(\rot,\Omega)^{\perp}
    \quad&\Leftrightarrow\quad
    0 = \scprod{f}{g}_{\Lp{2}(\Omega)} + \scprod{\rot f}{\rot g}_{\Lp{2}(\Omega)}
    \quad \forall\, g \in \Cc(\Omega) \\
    \quad&\Leftrightarrow\quad \rot f \in \soboH(\rot,\Omega) \quad\text{and}\quad \rot \rot f = - f. \qedhere
  \end{align*}

\end{proof}

\begin{lemma}\label{le:ortho-projection-on-cH-perp}
  Let $P$ be the orthogonal projection on $\cH(\rot,\Omega)^{\perp}$ (in $\soboH(\rot,\Omega)$).
  Then $\hH(\rot,\Omega) \cap \cH_{\Gamma_{0}}(\rot,\Omega)$ is invariant under $P$, i.e., $f \in \hH(\rot,\Omega) \cap \cH_{\Gamma_{0}}(\rot,\Omega)$ implies $Pf \in \hH(\rot,\Omega) \cap \cH_{\Gamma_{0}}(\rot,\Omega)$.
\end{lemma}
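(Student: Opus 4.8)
The plan is to exploit the orthogonal splitting $f = (\idop - P)f + Pf$, valid for every $f \in \soboH(\rot,\Omega)$, in which $(\idop - P)f \in \cH(\rot,\Omega)$ and $Pf \in \cH(\rot,\Omega)^{\perp}$. Since the weak tangential trace depends linearly on the field (and is unique where it exists, by the density of $\tanxtr \Cc_{\partial\Omega \setminus \Gamma}(\R^{3})$ in $\Lptan{2}(\Gamma)$), both $\hH(\rot,\Omega)$ and $\cH_{\Gamma_{0}}(\rot,\Omega)$ are linear subspaces of $\soboH(\rot,\Omega)$, hence so is their intersection. Writing $Pf = f - (\idop - P)f$, the claimed invariance will therefore follow the moment we know that $\cH(\rot,\Omega) \subseteq \hH(\rot,\Omega) \cap \cH_{\Gamma_{0}}(\rot,\Omega)$, so the whole argument reduces to these two inclusions.

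First, $\cH(\rot,\Omega) \subseteq \hH(\rot,\Omega)$ is immediate from the definitions, as $\cH(\rot,\Omega) = \cH_{\partial\Omega}(\rot,\Omega) \subseteq \hH_{\partial\Omega}(\rot,\Omega) = \hH(\rot,\Omega)$; concretely, every such field has vanishing tangential trace on all of $\partial\Omega$, which in particular lies in $\Lptan{2}(\partial\Omega)$. Second, for $\cH(\rot,\Omega) \subseteq \cH_{\Gamma_{0}}(\rot,\Omega)$ I would take $g \in \cH(\rot,\Omega)$, so that $\tantr[\partial\Omega] g = 0$, i.e.
\begin{equation*}
  \scprod{g}{\rot \phi}_{\Lp{2}(\Omega)} - \scprod{\rot g}{\phi}_{\Lp{2}(\Omega)} = 0
  \qquad \text{for all } \phi \in \Cc(\R^{3}),
\end{equation*}
and then observe that every $\phi \in \Cc_{\partial\Omega \setminus \Gamma_{0}}(\R^{3})$ is an admissible test field above, so the identity holds a fortiori for all $\phi \in \Cc_{\partial\Omega \setminus \Gamma_{0}}(\R^{3})$; by \Cref{def:weak-tangential-trace} this is precisely $\tantr[\Gamma_{0}] g = 0$, i.e. $g \in \cH_{\Gamma_{0}}(\rot,\Omega)$. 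Combining the two inclusions, $(\idop - P)f \in \cH(\rot,\Omega) \subseteq \hH(\rot,\Omega) \cap \cH_{\Gamma_{0}}(\rot,\Omega)$, and subtracting it from $f \in \hH(\rot,\Omega) \cap \cH_{\Gamma_{0}}(\rot,\Omega)$ yields $Pf \in \hH(\rot,\Omega) \cap \cH_{\Gamma_{0}}(\rot,\Omega)$; in fact one even gets $\tantr Pf = \tantr f$ and $\tantr[\Gamma_{0}] Pf = 0$.

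I do not expect a genuine obstacle here: the content is just the bookkeeping fact that $P$ changes $f$ only by an element of the ``small'' space $\cH(\rot,\Omega)$, which is itself contained in the ``large'' space $\hH(\rot,\Omega) \cap \cH_{\Gamma_{0}}(\rot,\Omega)$ — the two inclusions above. An alternative route via \Cref{le:cH-rot-perp}, namely verifying $\rot\rot Pf = -Pf$ and computing the weak traces of $Pf$ directly, would also work but is strictly more laborious and is not needed.
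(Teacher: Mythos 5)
Your proof is correct and follows essentially the same route as the paper: both use that $\idop - P$ projects onto $\cH(\rot,\Omega)$, that $\cH(\rot,\Omega) \subseteq \hH(\rot,\Omega) \cap \cH_{\Gamma_{0}}(\rot,\Omega)$, and then write $Pf = f - (\idop - P)f$ to conclude by linearity. The only difference is that you spell out the two inclusions (correctly, by restricting the test fields from $\Cc(\R^{3})$ to $\Cc_{\partial\Omega \setminus \Gamma_{0}}(\R^{3})$), which the paper asserts without detail.
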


\begin{proof}
  Since $\idop - P$ is the orthogonal projection on $\cH(\rot,\Omega)$ and $\cH(\rot,\Omega)$ is a subspace of $\hH(\rot,\Omega) \cap \cH_{\Gamma_{0}}(\rot,\Omega)$, we conclude that $(\idop - P)f \in \hH(\rot,\Omega) \cap \cH_{\Gamma_{0}}(\rot,\Omega)$ for every $f \in \soboH(\rot,\Omega)$. Now for every $f \in \hH(\rot,\Omega) \cap \cH_{\Gamma_{0}}(\rot,\Omega)$ we have
  \begin{equation*}
    P f = f - (\idop - P)f,
  \end{equation*}
  which is in $\hH(\rot,\Omega) \cap \cH_{\Gamma_{0}}(\rot,\Omega)$, as linear combination of $\hH(\rot,\Omega) \cap \cH_{\Gamma_{0}}(\rot,\Omega)$ elements.
\end{proof}

\begin{lemma}\label{le:range-of-tantr-characterizations}
  For every $q \in \tantr\big( \hH(\rot,\Omega) \cap \cH_{\Gamma_{0}}(\rot,\Omega) \big)$ there exists a $g \in \cH(\rot,\Omega)^{\perp}$ such that
  \begin{equation*}
    \rot g \in \hH(\rot,\Omega) \cap \cH_{\Gamma_{0}}(\rot,\Omega) \cap \cH(\rot,\Omega)^{\perp}
    \quad\text{and}\quad \tantr \rot g = q.
  \end{equation*}
  In particular,
  \[
    \tantr\big( \hH(\rot,\Omega) \cap \cH_{\Gamma_{0}}(\rot,\Omega) \big) = \tantr\big( \hH(\rot,\Omega) \cap \cH_{\Gamma_{0}}(\rot,\Omega) \cap \cH(\rot,\Omega)^{\perp}\big).
  \]
\end{lemma}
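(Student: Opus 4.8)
The plan is to produce $g$ from $q$ by applying the orthogonal projection $P$ onto $\cH(\rot,\Omega)^{\perp}$ and then inverting $\rot$ on that complement, using the structural facts from \Cref{le:cH-rot-perp,le:ortho-projection-on-cH-perp}.

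First I would pick $f \in \hH(\rot,\Omega) \cap \cH_{\Gamma_{0}}(\rot,\Omega)$ with $\tantr f = q$. By \Cref{le:ortho-projection-on-cH-perp}, $Pf$ again lies in $\hH(\rot,\Omega) \cap \cH_{\Gamma_{0}}(\rot,\Omega)$, and trivially $Pf \in \cH(\rot,\Omega)^{\perp}$, so $Pf \in \hH(\rot,\Omega) \cap \cH_{\Gamma_{0}}(\rot,\Omega) \cap \cH(\rot,\Omega)^{\perp}$. Moreover $f - Pf = (\idop - P)f$ lies in $\cH(\rot,\Omega)$, hence has vanishing tangential trace on $\partial\Omega$; since the weak tangential trace is linear in the field, $\tantr Pf = \tantr f = q$. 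This already gives the displayed identity, the inclusion ``$\supseteq$'' being trivial and ``$\subseteq$'' following since $q = \tantr Pf$ with $Pf$ in the smaller space.

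Next I would set $g \coloneqq -\rot(Pf)$. By \Cref{le:cH-rot-perp}, $\rot$ maps $\cH(\rot,\Omega)^{\perp}$ into itself, so $g \in \cH(\rot,\Omega)^{\perp}$; and since $Pf \in \cH(\rot,\Omega)^{\perp}$ satisfies $\rot\rot(Pf) = -Pf$ by the same lemma, we obtain $\rot g = -\rot\rot(Pf) = Pf$. Thus $\rot g = Pf \in \hH(\rot,\Omega) \cap \cH_{\Gamma_{0}}(\rot,\Omega) \cap \cH(\rot,\Omega)^{\perp}$ and $\tantr \rot g = \tantr Pf = q$, as required.

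There is no real obstacle here; the proof is essentially an assembly of \Cref{le:cH-rot-perp} and \Cref{le:ortho-projection-on-cH-perp}. The only points worth flagging are that $g$ itself need not possess an $\Lp{2}$ tangential trace (only $\rot g$ does, which is all that is claimed), and that one must genuinely invoke \Cref{le:ortho-projection-on-cH-perp} for ``$Pf \in \hH(\rot,\Omega)$'' rather than argue via smooth approximation, which would be circular at this stage; the equality $\tantr Pf = \tantr f$ then follows purely from $f - Pf \in \cH(\rot,\Omega)$ having zero tangential trace.
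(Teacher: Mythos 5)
Your proof is correct and follows essentially the same route as the paper: take $f$ with $\tantr f = q$, use \Cref{le:ortho-projection-on-cH-perp} to see $Pf$ stays in $\hH(\rot,\Omega)\cap\cH_{\Gamma_{0}}(\rot,\Omega)$ with $\tantr Pf = q$ since $(\idop-P)f\in\cH(\rot,\Omega)$, and then set $g=-\rot Pf$ so that $\rot g = Pf$ by \Cref{le:cH-rot-perp}. Your closing remarks (that only $\rot g$, not $g$, needs an $\Lp{2}$ trace, and that smooth approximation must be avoided here) are accurate and consistent with the paper's reasoning.
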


\begin{proof}
  By assumption we have $q = \tantr f$ for $f \in \hH(\rot,\Omega) \cap \cH_{\Gamma_{0}}(\rot,\Omega)$. Let $P$ denote the orthogonal projection on $\cH(\rot,\Omega)^{\perp}$. Then by \Cref{le:ortho-projection-on-cH-perp} we can decompose $f$ into $f = Pf + (\idop - P)f$, where both $Pf$ and $(\idop - P)f$ are also in $\hH(\rot,\Omega) \cap \cH_{\Gamma_{0}}(\rot,\Omega)$. Moreover, $(\idop - P)f \in \cH(\rot,\Omega)$, which gives $\tantr (\idop - P)f = 0$ and therefore
  \begin{equation*}
    q = \tantr f = \tantr Pf.
  \end{equation*}
  Since $Pf \in \cH(\rot,\Omega)^{\perp}$, we have $\rot \rot Pf = -Pf$. Thus defining $g = -\rot Pf$ finishes the proof.
\end{proof}

Now we finally come to our second main theorem.

\begin{theorem}\label{th:density-in-hH-gamma1}
  $\Cc(\R^{3})$ is dense in $\hH_{\Gamma_{1}}(\rot,\Omega)$ w.r.t.\ $\norm{\argdot}_{\hH_{\Gamma_{1}}(\rot,\Omega)}$.
\end{theorem}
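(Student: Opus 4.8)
The plan is to run the Hilbert-space duality argument in $\hH_{\Gamma_{1}}(\rot,\Omega)$, with \Cref{th:characterization-of-hH-Gamma-with-closure-of-test-functions} (which packages the already-proved density \Cref{th:Cc-gamma0-dense-in-hH-cap-cH-gamma0}) as the essential input; this is exactly why it is convenient to prove the two main theorems in reversed order. Every $\phi\in\Cc(\R^{3})$ has a smooth, hence $\Lptan{2}(\Gamma_{1})$, tangential trace, so $\Cc(\R^{3})\subseteq\hH_{\Gamma_{1}}(\rot,\Omega)$ and it suffices to show that any $h\in\hH_{\Gamma_{1}}(\rot,\Omega)$ orthogonal to $\Cc(\R^{3})$, i.e.\ with
\[
  \scprod{h}{\phi}_{\Lp{2}(\Omega)}+\scprod{\rot h}{\rot\phi}_{\Lp{2}(\Omega)}+\scprod{\tantr h}{\tantr\phi}_{\Lp{2}(\Gamma_{1})}=0\quad\text{for all }\phi\in\Cc(\R^{3}),
\]
vanishes. (Completeness of $\hH_{\Gamma_{1}}(\rot,\Omega)$, needed for the projection theorem, follows by passing to the limit in the defining weak identity of \Cref{def:weak-tangential-trace}.)

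First I would test only against $\phi\in\Cc(\Omega)\subseteq\Cc(\R^{3})$: the boundary term drops out, and what remains, $\scprod{h}{\phi}_{\Lp{2}(\Omega)}+\scprod{\rot h}{\rot\phi}_{\Lp{2}(\Omega)}=0$ for all $\phi\in\Cc(\Omega)$, says by density of $\Cc(\Omega)$ in $\cH(\rot,\Omega)$ that $h\in\cH(\rot,\Omega)^{\perp}$; by \Cref{le:cH-rot-perp} this means $\rot h\in\soboH(\rot,\Omega)$ with $\rot\rot h=-h$.

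The crux is to re-read the full orthogonality relation as a weak-trace identity for $\rot h$. Substituting $h=-\rot\rot h$, it becomes
\[
  \scprod{\rot(\rot h)}{\phi}_{\Lp{2}(\Omega)}-\scprod{\rot h}{\rot\phi}_{\Lp{2}(\Omega)}=\scprod{q}{\tantr\phi}_{\Lptan{2}(\partial\Omega)}\quad\text{for all }\phi\in\Cc(\R^{3}),
\]
where $q\in\Lptan{2}(\partial\Omega)$ is the extension of $\tantr h$ by zero across $\partial\Omega\setminus\Gamma_{1}$. By \Cref{def:weak-tangential-trace} (twisted tangential trace, $\Gamma=\partial\Omega$) this is precisely $\rot h\in\hH(\rot,\Omega)$ with $\tanxtr\rot h=q$. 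In particular $\tanxtr\rot h=0$ on $\Gamma_{0}$, hence $\tantr\rot h=0$ on $\Gamma_{0}$, so $\rot h\in\hH(\rot,\Omega)\cap\cH_{\Gamma_{0}}(\rot,\Omega)$; and on $\Gamma_{1}$ we have $\tanxtr\rot h=\tantr h$, whence, using $\tanxtr\,\argdot=\nu\times\tantr\,\argdot$, $\tantr\rot h=(\tantr h)\times\nu=-\tanxtr h$ on $\Gamma_{1}$.

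Finally I would close the loop with the pairing of \Cref{th:characterization-of-hH-Gamma-with-closure-of-test-functions} for $f=h\in\hH_{\Gamma_{1}}(\rot,\Omega)$ and $g=\rot h\in\hH(\rot,\Omega)\cap\cH_{\Gamma_{0}}(\rot,\Omega)$:
\[
  \scprod{\tanxtr h}{\tantr\rot h}_{\Lp{2}(\Gamma_{1})}=\scprod{\rot h}{\rot h}_{\Lp{2}(\Omega)}-\scprod{h}{\rot\rot h}_{\Lp{2}(\Omega)}=\norm{\rot h}_{\Lp{2}(\Omega)}^{2}+\norm{h}_{\Lp{2}(\Omega)}^{2}.
\]
By the previous step the left-hand side equals $-\norm{\tanxtr h}_{\Lp{2}(\Gamma_{1})}^{2}\le 0$, while the right-hand side is $\ge 0$, so both vanish and $h=0$, proving the density. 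The one real obstacle is the identification in the third paragraph: recognizing that the orthogonality relation itself upgrades $\rot h$ into $\hH(\rot,\Omega)\cap\cH_{\Gamma_{0}}(\rot,\Omega)$, the space to which \Cref{th:characterization-of-hH-Gamma-with-closure-of-test-functions} applies; the remaining steps are bookkeeping with the sign conventions relating $\tantr$ and $\tanxtr$.
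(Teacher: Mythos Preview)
Your proof is correct. Both your argument and the paper's rest on the same two ingredients, \Cref{le:cH-rot-perp} and \Cref{th:characterization-of-hH-Gamma-with-closure-of-test-functions}, but the packaging differs. The paper casts the duality as an equality of operator adjoints: it views $\tanxtr\colon\hH_{\Gamma_{1}}(\rot,\Omega)\subseteq\soboH(\rot,\Omega)\to\Lptan{2}(\Gamma_{1})$ as a closed operator, computes $\dom\tanxtr\adjun$ and the adjoint of its restriction to $\Cc(\R^{3})$, and shows they coincide (this uses the auxiliary \Cref{le:ortho-projection-on-cH-perp} and \Cref{le:range-of-tantr-characterizations}). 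You instead run the orthogonal-complement argument directly in the Hilbert space $\hH_{\Gamma_{1}}(\rot,\Omega)$: starting from $h\perp\Cc(\R^{3})$ you extract $\rot\rot h=-h$, read off $\rot h\in\hH(\rot,\Omega)\cap\cH_{\Gamma_{0}}(\rot,\Omega)$ from the full orthogonality, and feed the pair $(h,\rot h)$ into \Cref{th:characterization-of-hH-Gamma-with-closure-of-test-functions} to force $h=0$. Your route is shorter and bypasses \Cref{le:ortho-projection-on-cH-perp} and \Cref{le:range-of-tantr-characterizations} entirely; the paper's operator-theoretic version has the advantage of identifying $\dom\tanxtr\adjun=\tantr\bigl(\hH(\rot,\Omega)\cap\cH_{\Gamma_{0}}(\rot,\Omega)\bigr)$ along the way, which is of some independent interest.
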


\begin{proof}
  \newcommand{\restanxtr}{\widetilde{\gamma}_{\tau}}
  By the definition of the norm of $\hH_{\Gamma_{1}}(\rot,\Omega)$ the mapping $\tanxtr \colon \hH_{\Gamma_{1}}(\rot,\Omega) \subseteq \soboH(\rot,\Omega) \to \Lptan{2}(\Gamma_{1})$ is closed. We define the following restriction of $\tanxtr$
  \begin{equation*}
    \restanxtr \colon
    \mapping{\Cc(\R^{3}) \subseteq \soboH(\rot,\Omega)}{\Lptan{2}(\Gamma_{1})}{f}{\tanxtr f.}
  \end{equation*}
  Since $\restanxtr \subseteq \tanxtr$ we conclude
  \begin{equation*}
    \restanxtr\adjun \supseteq \tanxtr\adjun.
  \end{equation*}
  \begin{steps}
    \item \emph{Calculate $\dom \restanxtr\adjun$.}
    Let $q \in \dom \restanxtr\adjun$. Then there exists a $g \in \soboH(\rot,\Omega)$ such that
    \begin{equation}\label{eq:adjoint-formulation-for-hat-tanxtr}
      \scprod{\restanxtr f}{q}_{\Lp{2}(\Gamma_{1})} = \scprod{f}{g}_{\soboH(\rot,\Omega)} = \scprod{f}{g}_{\Lp{2}(\Omega)} + \scprod{\rot f}{\rot g}_{\Lp{2}(\Omega)}
    \end{equation}
    for all $f \in \Cc(\R^{3})$. For $f \in \Cc_{\Gamma_{1}}(\R^{3})$, we obtain
    \begin{equation*}
      0 = \scprod{f}{g}_{\Lp{2}(\Omega)} + \scprod{\rot f}{\rot g}_{\Lp{2}(\Omega)},
    \end{equation*}
    which implies $\rot g \in \cH_{\Gamma_{0}}(\rot,\Omega)$ and $\rot\rot g = -g$, and by \Cref{le:cH-rot-perp} $g \in \cH(\rot,\Omega)^{\perp}$. Hence, we revisit~\eqref{eq:adjoint-formulation-for-hat-tanxtr}, where we extend $q$ by $0$ outside of $\Gamma_{1}$ in $\partial\Omega$ and use the full twisted tangential trace $\tanxtr$ (on all of $\partial\Omega$) for $f \in \Cc(\R^{3})$
    \begin{equation*}
      \scprod{\tanxtr f}{q}_{\Lp{2}(\partial\Omega)} = -\scprod{f}{\rot \rot g}_{\Lp{2}(\Omega)} + \scprod{\rot f}{\rot g}_{\Lp{2}(\Omega)}.
    \end{equation*}
    This implies $\rot g \in \hH(\rot,\Omega)$ and $q = \tantr \rot g$.
    Consequently,
    \begin{align*}
      \dom \restanxtr\adjun
      &\subseteq \tantr\big( \hH(\rot,\Omega) \cap \cH_{\Gamma_{0}}(\rot,\Omega) \cap \cH(\rot,\Omega)^{\perp} \big) \\
      &= \tantr\big( \hH(\rot,\Omega) \cap \cH_{\Gamma_{0}}(\rot,\Omega) \big).
    \end{align*}

    \item \emph{Calculate $\dom \tanxtr\adjun$.}
    %
    If $q \in \tantr\big( \hH(\rot,\Omega) \cap \cH_{\Gamma_{0}}(\rot,\Omega) \big)$, then by \Cref{le:range-of-tantr-characterizations} there exists a $g \in \cH(\rot,\Omega)^{\perp}$ such that $\rot g \in \hH(\rot,\Omega) \cap \cH_{\Gamma_{0}}(\rot,\Omega)$ and $\tantr \rot g = q$. Hence, by \Cref{th:characterization-of-hH-Gamma-with-closure-of-test-functions} for $f \in \hH_{\Gamma_{1}}(\rot,\Omega)$ and $\rot g$ we have
    \begin{equation*}
      \scprod{\tanxtr f}{\underbrace{\tanxtr \rot g}_{=\mathrlap{q}}}_{\Lp{2}(\Gamma_{1})} = \scprod{\rot f}{\rot g}_{\Lp{2}(\Omega)} - \scprod{f}{\underbrace{\rot \rot g}_{=\mathrlap{-g}}}_{\Lp{2}(\Omega)}
      = \scprod{f}{g}_{\soboH(\rot,\Omega)},
    \end{equation*}
    which implies $q \in \dom \tanxtr\adjun$.
    Consequently,
    \[
      \dom \tanxtr\adjun \supseteq \tantr\big( \hH(\rot,\Omega) \cap \cH_{\Gamma_{0}}(\rot,\Omega) \big).
    \]

    \item Combining the results of the previous steps yields
    \begingroup%
    \newcommand{\tmpEq}{%
      \tantr\big( \hH(\rot,\Omega) \cap \cH_{\Gamma_{0}}(\rot,\Omega) \big)
      \supseteq \dom \restanxtr\adjun \\%
      \supseteq \dom \tanxtr\adjun
      \supseteq \tantr\big( \hH(\rot,\Omega) \cap \cH_{\Gamma_{0}}(\rot,\Omega) \big).%
    }%
        \begin{multline*}
          \tmpEq
        \end{multline*}%
    \endgroup%
    Hence, $\restanxtr\adjun = \tanxtr\adjun$ and therefore
    \begin{align*}
      \cl{\restanxtr} = \restanxtr\adjun[2] = \tanxtr\adjun[2] = \tanxtr,
    \end{align*}
    which implies $\Cc(\R^{3})$ is dense in $\hH_{\Gamma_{1}}(\rot,\Omega)$ w.r.t.\ the graph norm of $\tanxtr$ which is $\norm{\argdot}_{\hH_{\Gamma_{1}}(\rot,\Omega)}$.
    \qedhere
  \end{steps}
\end{proof}

\subsection{Integration by parts}

Let $\Omega \subseteq \R^{3}$ be a Lipschitz domain and $\Gamma_{0},\Gamma_{1} \subseteq \partial\Omega$ a decomposition of $\partial\Omega$ such that $(\Omega,\Gamma_{0})$ is a Lipschitz pair.
The integration by parts formula
\begin{equation*}
  \scprod{\rot f}{g}_{\Lp{2}(\Omega)} - \scprod{f}{\rot g}_{\Lp{2}(\Omega)} = \scprod{\tanxtr f}{\tantr g}_{\Lp{2}(\Gamma_{1})}
\end{equation*}
is a priori valid for $f \in \Cc(\R^{3})$ and $g \in \Cc_{\Gamma_{0}}(\R^{3})$.
Moreover, by definition of $\hH_{\Gamma_{1}}(\rot,\Omega)$ and $\hH_{\partial\Omega}(\rot,\Omega) \cap \cH_{\Gamma_{0}}(\rot,\Omega)$ we can extend this to either $f \in \hH_{\Gamma_{1}}(\rot,\Omega)$ or $g \in \hH_{\partial\Omega}(\rot,\Omega) \cap \cH_{\Gamma_{0}}(\rot,\Omega)$, but a priori not to both simultaneously.

As we have seen in \Cref{th:characterization-of-hH-Gamma-with-closure-of-test-functions} one density result is already sufficient to show that a simultaneous extension is valid. However, with both density results we conclude that this extension is even a continuous extension.

\section{The unbounded case}%
\label{sec:unbounded-domains}
In this section we consider an unbounded Lipschitz domain $\Omega \subseteq \R^{3}$ such that $(\Omega,\Gamma_{0})$ is a Lipschitz pair for $\Gamma_{0} \subseteq \partial\Omega$ and we define $\Gamma_{1} \coloneqq \partial\Omega \setminus \cl{\Gamma_{0}}$.

The main idea is to multiply a vector field $f \in \hH_{\Gamma_{1}}(\rot,\Omega)$ with a cutoff function $\chi$ that is supported on a sufficiently large ball such that $\norm{f - \chi f}_{\hH_{\Gamma_{1}}(\rot,\Omega)} \leq \epsilon$ for given $\epsilon > 0$. Then we only have to approximate $\chi f$ with smooth vector fields. This reduces the problem again to bounded domains, as the support of $\chi f$ is bounded.

In order to execute these ideas we show some technical lemmas, that explain how the boundary conditions are inherited in a smooth cutoff process. Intuitively the lemmas are obvious, however we present them for completeness.

\begin{lemma}\label{le:mult-with-cut-of-stays-in-hH}
  Let $f \in \hH_{\Gamma_{1}}(\rot,\Omega)$ and $\chi \in \Cc(\R^{3})$.
  Then $\chi f \in \hH_{\Gamma_{1}}(\rot,\Omega)$ with $\tantr (\chi f) = \chi \tantr f$ on $\Gamma_{1}$.
\end{lemma}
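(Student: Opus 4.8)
The plan is to verify the weak-trace identity directly for $\chi f$. First I would recall the defining property of $f \in \hH_{\Gamma_{1}}(\rot,\Omega)$: there is $q \coloneqq \tantr f \in \Lptan{2}(\Gamma_{1})$ with
\begin{equation*}
  \scprod{f}{\rot \phi}_{\Lp{2}(\Omega)} - \scprod{\rot f}{\phi}_{\Lp{2}(\Omega)} = \scprod{q}{\tanxtr \phi}_{\Lptan{2}(\Gamma_{1})}
\end{equation*}
for all $\phi \in \Cc_{\Gamma_{0}}(\R^{3})$. The goal is to show that $\chi q$ (which clearly lies in $\Lptan{2}(\Gamma_{1})$, being a bounded pointwise multiple of $q$, still normal-free) serves as $\tantr(\chi f)$.

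The key computation is a product-rule manipulation. For $\phi \in \Cc_{\Gamma_{0}}(\R^{3})$ I would use the identity $\rot(\chi\phi) = \chi\,\rot\phi + \grad\chi \times \phi$ (valid since $\chi,\phi$ are smooth) together with the fact that $\chi\phi \in \Cc_{\Gamma_{0}}(\R^{3})$ as well, so it is an admissible test function. Plugging $\chi\phi$ into the defining identity for $f$ gives
\begin{align*}
  \scprod{f}{\chi\,\rot\phi}_{\Lp{2}(\Omega)} + \scprod{f}{\grad\chi\times\phi}_{\Lp{2}(\Omega)} - \scprod{\rot f}{\chi\phi}_{\Lp{2}(\Omega)}
  = \scprod{q}{\tanxtr(\chi\phi)}_{\Lptan{2}(\Gamma_{1})}.
\end{align*}
Now $\scprod{f}{\chi\,\rot\phi}_{\Lp{2}(\Omega)} = \scprod{\chi f}{\rot\phi}_{\Lp{2}(\Omega)}$ and $\scprod{\rot f}{\chi\phi}_{\Lp{2}(\Omega)} = \scprod{\chi\,\rot f}{\phi}_{\Lp{2}(\Omega)}$, and $\rot(\chi f) = \chi\,\rot f + \grad\chi\times f$ shows $\chi f \in \soboH(\rot,\Omega)$ with this $\Lp2$ curl. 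Substituting, the two $\grad\chi\times f$ and $\grad\chi\times\phi$ terms cancel against each other after a scalar-triple-product rearrangement ($\scprod{f}{\grad\chi\times\phi} = \scprod{\grad\chi\times f}{\phi}$ up to sign), leaving exactly
\begin{equation*}
  \scprod{\chi f}{\rot\phi}_{\Lp{2}(\Omega)} - \scprod{\rot(\chi f)}{\phi}_{\Lp{2}(\Omega)} = \scprod{q}{\chi\,\tanxtr\phi}_{\Lptan{2}(\Gamma_{1})} = \scprod{\chi q}{\tanxtr\phi}_{\Lptan{2}(\Gamma_{1})},
\end{equation*}
using $\tanxtr(\chi\phi) = \chi\,\tanxtr\phi$ (pointwise on $\partial\Omega$, since $\chi$ is scalar). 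By \Cref{def:weak-tangential-trace} this is precisely the statement that $\chi f \in \hH_{\Gamma_{1}}(\rot,\Omega)$ with $\tantr(\chi f) = \chi q = \chi\,\tantr f$ on $\Gamma_{1}$.

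I do not expect a serious obstacle here — the proof is a bookkeeping exercise. The only point requiring mild care is the vector-calculus identity $\scprod{f}{\grad\chi\times\phi}_{\Lp{2}(\Omega)} = -\scprod{\grad\chi\times f}{\phi}_{\Lp{2}(\Omega)}$ and checking the signs in $\rot(\chi\phi)$ so that the unwanted terms genuinely cancel; and verifying that $\chi\phi$ still satisfies $\dist(\Gamma_0,\supp(\chi\phi))>0$, which is immediate since $\supp(\chi\phi)\subseteq\supp\phi$. One should also note explicitly that $\chi q$ has vanishing normal component, hence lies in $\Lptan{2}(\Gamma_1)$, so the right-hand side is a legitimate $\Lptan{2}(\Gamma_1)$-pairing.
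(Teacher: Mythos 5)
Your proposal is correct and follows essentially the same route as the paper's proof: test the weak-trace identity for $f$ with $\chi\phi$, apply the product rule for $\rot$ to both $\chi\phi$ and $\chi f$, and cancel the $\grad\chi$ terms via the scalar triple product to obtain the defining identity for $\chi f$ with trace $\chi\,\tantr f$. The auxiliary checks you flag ($\chi\phi\in\Cc_{\Gamma_{0}}(\R^{3})$, $\chi q\in\Lptan{2}(\Gamma_{1})$, signs in the triple product) are exactly the right ones and are handled correctly.
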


\begin{proof}
  Let $\phi \in \Cc_{\Gamma_{0}}(\R^{3})$. Note that by the product rule for $\rot$ we have $\rot (\chi f) = \chi \rot f + \grad \chi \times f$. Moreover, we have $\chi \phi \in \Cc_{\Gamma_{0}}(\R^{3})$ and we denote $\tantr f$ as $q \in \Lptan{2}(\Gamma_{1})$ on $\Gamma_{1}$.
  Hence,
  \begin{align*}
    \MoveEqLeft
    \scprod{\phi}{\rot (\chi f)}_{\Lp{2}(\Omega)} \\
    &= \scprod{\phi}{\chi \rot f}_{\Lp{2}(\Omega)} + \scprod{\phi}{\grad \chi \times f}_{\Lp{2}(\Omega)}
      = \scprod{\chi \phi}{\rot f}_{\Lp{2}(\Omega)} - \scprod{\grad \chi \times \phi}{f}_{\Lp{2}(\Omega)} \\
    &=  \scprod{\rot (\chi \phi)}{f}_{\Lp{2}(\Omega)} + \scprod{\tanxtr (\chi \phi)}{q}_{\Lptan{2}(\Gamma_{1})} - \scprod{\grad \chi \times \phi}{f}_{\Lp{2}(\Omega)} \\
    &= \scprod{\chi \rot \phi}{f}_{\Lp{2}(\Omega)} + \scprod{\tanxtr \phi}{\chi q}_{\Lptan{2}(\Gamma_{1})},
  \end{align*}
  which implies $\chi f \in \hH_{\Gamma_{1}}(\rot,\Omega)$ with $\tantr \chi f = \chi q$.
\end{proof}

\begin{corollary}
  Let $f \in \hH_{\partial\Omega}(\rot,\Omega) \cap \cH_{\Gamma_{0}}(\rot,\Omega)$ and $\chi \in \Cc(\R^{3})$.
  Then $\chi f \in \hH_{\partial\Omega}(\rot,\Omega) \cap \cH_{\Gamma_{0}}(\rot,\Omega)$ with $\tantr (\chi f) = \chi \tantr f$ on $\Gamma_{1}$.
\end{corollary}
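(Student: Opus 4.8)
The plan is to obtain this as an immediate consequence of the computation already carried out in \Cref{le:mult-with-cut-of-stays-in-hH}. The crucial observation is that nothing in that proof is specific to the set $\Gamma_{1}$: replacing $\Gamma_{1}$ by an arbitrary open $\Gamma \subseteq \partial\Omega$ and correspondingly letting the test functions $\phi$ range over $\Cc_{\partial\Omega\setminus\Gamma}(\R^{3})$, the same three lines — expand $\rot(\chi f) = \chi\rot f + \grad\chi\times f$, move $\chi$ across the $\Lp{2}$ inner products, and recognise $\rot(\chi\phi)$ — go through verbatim. So first I would invoke this with $\Gamma = \partial\Omega$, for which the admissible test functions are all of $\Cc(\R^{3})$ and the tangential trace in question is the full one: since $f \in \hH_{\partial\Omega}(\rot,\Omega)$ has tangential trace $q \coloneqq \tantr f \in \Lptan{2}(\partial\Omega)$, this yields $\chi f \in \hH_{\partial\Omega}(\rot,\Omega)$ together with the identity $\tantr(\chi f) = \chi\tantr f$ on all of $\partial\Omega$.

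It then remains only to transfer the homogeneous condition. Since $f \in \cH_{\Gamma_{0}}(\rot,\Omega)$ means precisely $\tantr[\Gamma_{0}] f = 0$, restricting the identity $\tantr(\chi f) = \chi\tantr f$ from $\partial\Omega$ down to $\Gamma_{0}$ gives $\tantr[\Gamma_{0}](\chi f) = \chi\cdot 0 = 0$, i.e.\ $\chi f \in \cH_{\Gamma_{0}}(\rot,\Omega)$. Combined with the previous step this shows $\chi f \in \hH_{\partial\Omega}(\rot,\Omega) \cap \cH_{\Gamma_{0}}(\rot,\Omega)$, and restricting the same identity to $\Gamma_{1}$ yields the asserted formula $\tantr(\chi f) = \chi\tantr f$ on $\Gamma_{1}$.

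There is no real obstacle here; the only point worth a word is that \Cref{le:mult-with-cut-of-stays-in-hH} is literally stated for $\hH_{\Gamma_{1}}$, so one should either remark explicitly that its proof is valid for every open part of $\partial\Omega$ (in particular for $\partial\Omega$ itself) or simply repeat the short computation once more with $\phi$ ranging over $\Cc(\R^{3})$. After that the corollary is a one-line deduction.
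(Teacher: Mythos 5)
Your argument is correct and coincides with the paper's own proof: the paper likewise applies \Cref{le:mult-with-cut-of-stays-in-hH} with $\Gamma_{1}$ replaced by $\partial\Omega$ to get $\chi f \in \hH_{\partial\Omega}(\rot,\Omega)$ with $\tantr(\chi f)=\chi\tantr f$, and then uses $\tantr f=0$ on $\Gamma_{0}$ to conclude $\chi f\in\cH_{\Gamma_{0}}(\rot,\Omega)$. Your remark that the lemma, though stated for $\Gamma_{1}$, holds verbatim for any open part of $\partial\Omega$ is exactly the (implicit) justification the paper relies on.
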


\begin{proof}
  By \Cref{le:mult-with-cut-of-stays-in-hH} applied with $\Gamma_{1} = \partial\Omega$ we have $\chi f \in \hH_{\partial\Omega}(\rot,\Omega)$ and $\tantr \chi f = \chi \tantr f$. Since $\tantr f = 0$ on $\Gamma_{0}$ we conclude that also $\tantr \chi f = 0$ on $\Gamma_{0}$ and consequently $\chi f \in \cH_{\Gamma_{0}}(\rot,\Omega)$.
\end{proof}

\begin{lemma}\label{le:cutoff-function-in-cylinder}
  Let $f \in \hH(\rot,\Omega) \cap \cH_{\Gamma_{0}}(\rot,\Omega)$, $p \in \partial\Omega$, $\alpha \in \Cc(C_{\epsilon,h}(p))$, $\Omega_{p} = \Omega \cap C_{\epsilon,h}(p)$ and $\Gamma_{1,p} = \Gamma_{1} \cap \supp \alpha$.
  Then $\alpha f \in \hH(\rot,\Omega_{p}) \cap \cH_{\partial\Omega_{p} \setminus \cl{\Gamma_{1,p}}}(\rot,\Omega_{p})$ with $\tantr \alpha f = \alpha \tantr f$ on $\partial\Omega_{p}$, where $\tantr f$ is extended by $0$ on $\partial\Omega_{p} \setminus \partial\Omega$.
\end{lemma}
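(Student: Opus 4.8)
The plan is to use the cut-off $\alpha$ to reduce everything to the facts already available on $\Omega$ and then transport the defining weak identity from $\Omega$ to $\Omega_{p}$. The single geometric input is that $\supp\alpha$ is a \emph{compact} subset of the \emph{open} cylinder $C_{\epsilon,h}(p)$, so that $\dist(\supp\alpha,\partial C_{\epsilon,h}(p))>0$. Consequently $\partial\Omega_{p}$ coincides with $\partial\Omega$ on a neighbourhood of $\supp\alpha$ (hence with the same surface measure and the same outward normal there), the fields $\alpha f$ and $\rot(\alpha f)=\alpha\rot f+\grad\alpha\times f$ are supported in $\supp\alpha\subseteq\Omega_{p}$, and $\Omega_{p}$ is again a Lipschitz domain (the intersection of a Lipschitz graph domain with the cylinder), so that $\hH(\rot,\Omega_{p})$, $\cH_{\bullet}(\rot,\Omega_{p})$ and the boundary pairings all make sense. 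In particular $\alpha f\in\soboH(\rot,\Omega_{p})$ is immediate from the product rule.

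First I would apply \Cref{le:mult-with-cut-of-stays-in-hH}, with its ``$\Gamma_{1}$'' taken to be the whole boundary $\partial\Omega$, to the field $f\in\hH(\rot,\Omega)=\hH_{\partial\Omega}(\rot,\Omega)$ and $\chi=\alpha$: this gives $\alpha f\in\hH_{\partial\Omega}(\rot,\Omega)$ with $\tantr(\alpha f)=\alpha\,\tantr f$ on $\partial\Omega$. Since moreover $f\in\cH_{\Gamma_{0}}(\rot,\Omega)$, i.e.\ $\tantr f=0$ a.e.\ on $\Gamma_{0}$, the trace $\alpha\,\tantr f$ vanishes a.e.\ on $\Gamma_{0}$ and, as $\partial\Omega\setminus\Gamma_{0}$ differs from $\Gamma_{1}$ only by the surface-measure-null set $\partial\Gamma_{0}$, it is supported (up to a null set) in $\supp\alpha\cap\Gamma_{1}=\Gamma_{1,p}$.

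Now fix $\phi\in\Cc(\R^{3})$; these are precisely the test fields admissible for the full trace on $\partial\Omega_{p}$, since $\Cc(\R^{3})=\Cc_{\partial\Omega_{p}\setminus\partial\Omega_{p}}(\R^{3})$. Because $\alpha f$ and $\rot(\alpha f)$ vanish off $\supp\alpha\subseteq\Omega_{p}$, the $\Lp{2}(\Omega_{p})$-pairings agree with the corresponding $\Lp{2}(\Omega)$-pairings, and the weak-trace identity for $\alpha f\in\hH_{\partial\Omega}(\rot,\Omega)$ yields
\begin{multline*}
  \scprod{\alpha f}{\rot\phi}_{\Lp{2}(\Omega_{p})}-\scprod{\rot(\alpha f)}{\phi}_{\Lp{2}(\Omega_{p})}
  \\
  = \scprod{\alpha\,\tantr f}{\tanxtr\phi}_{\Lptan{2}(\partial\Omega)}.
\end{multline*}
Since $\alpha\,\tantr f$ is supported in $\supp\alpha\cap\partial\Omega$, where $\partial\Omega_{p}=\partial\Omega$ (so that the twisted tangential traces of $\phi$ computed with respect to $\Omega$ and to $\Omega_{p}$ coincide there), the field $q$ which equals $\alpha\,\tantr f$ on $\partial\Omega\cap\partial\Omega_{p}$ and $0$ on $\partial\Omega_{p}\setminus\partial\Omega$ belongs to $\Lptan{2}(\partial\Omega_{p})$, and the right-hand side above equals $\scprod{q}{\tanxtr\phi}_{\Lptan{2}(\partial\Omega_{p})}$. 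As this holds for all $\phi\in\Cc(\R^{3})$, the weak definition gives $\alpha f\in\hH(\rot,\Omega_{p})$ with $\tantr(\alpha f)=q$, which is exactly the asserted formula $\tantr(\alpha f)=\alpha\,\tantr f$ on $\partial\Omega_{p}$, with $\tantr f$ extended by $0$ on $\partial\Omega_{p}\setminus\partial\Omega$. Finally, by the support property of the previous paragraph, $q=0$ a.e.\ on $\partial\Omega_{p}\setminus\Gamma_{1,p}$, in particular on $\partial\Omega_{p}\setminus\cl{\Gamma_{1,p}}$, so $\tantr(\alpha f)=0$ there, i.e.\ $\alpha f\in\cH_{\partial\Omega_{p}\setminus\cl{\Gamma_{1,p}}}(\rot,\Omega_{p})$.

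The computation itself is routine; the genuinely delicate points are purely geometric and measure-theoretic: that the cut-off annihilates every boundary contribution on the ``new'' portion $\partial\Omega_{p}\setminus\partial\Omega$ of the boundary — this is exactly where $\dist(\supp\alpha,\partial C_{\epsilon,h}(p))>0$ enters — that $\Omega_{p}$ is a Lipschitz domain so the target spaces and the boundary pairings are meaningful, and that $\partial\Gamma_{0}$ carries no surface measure, which is what upgrades ``$\tantr f=0$ on $\Gamma_{0}$'' to ``$q$ is confined to $\cl{\Gamma_{1,p}}$''.
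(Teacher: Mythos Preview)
Your argument is correct and follows essentially the same route as the paper: first invoke \Cref{le:mult-with-cut-of-stays-in-hH} (with ``$\Gamma_{1}$'' taken as $\partial\Omega$) to get $\alpha f\in\hH(\rot,\Omega)$ with $\tantr(\alpha f)=\alpha\,\tantr f$, then use $\supp\alpha\Subset C_{\epsilon,h}(p)$ to transport the weak identity from $\Omega$ to $\Omega_{p}$ and read off both the $\Lp{2}$ trace and the vanishing on $\partial\Omega_{p}\setminus\cl{\Gamma_{1,p}}$. You spell out more of the geometric and measure-theoretic background (coincidence of the normals on $\supp\alpha\cap\partial\Omega$, $\Omega_{p}$ being Lipschitz, $\partial\Gamma_{0}$ being surface-null) than the paper does, but the skeleton of the proof is the same.
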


The setting of the previous lemma is illustrated in \Cref{fig:cutoff-function-in-cylinder}.

\begin{figure}[ht]
  \centering
  \begin{tikzpicture}[scale=2]
  \coordinate (p) at (0.5,1);

  \draw (1,0) .. controls (0.48,0.18) .. (p);
  \draw[name path=abschnitt2] (p) .. controls (1,1.7) .. (2,2);
  \draw
  (2,2) to [out=0,in=190] (3,2.2);
  \draw[dashed] (3,2.2) to[out=10,in=190] (4,2.5);

  \path [name path=rect, draw=none] (-1,1.9) -- (3,1.9);
  \path [name intersections={of = abschnitt2 and rect, by=p2}];

  \draw ($(p2) + (-0.8,-0.8)$) rectangle ($(p2) + (0.8,0.8)$);
  \node[anchor=north west] at ($(p2) + (-0.8,0.8)$) {$C_{\epsilon,h}(p)$};

  \draw[fill=yellow,opacity=0.2]
  ($(p2) + (0,-0.6)$)
  to[out=170,in=-100] ($(p2) + (-0.7,0)$)
  to[out=80,in=200] ($(p2) + (0,0.6)$)
  to[out=20,in=90] ($(p2) + (0.7,0.4)$)
  to ($(p2) + (0.7,0)$)
  to[out=-90,in=-10] ($(p2) + (0,-0.6)$);

  \begin{scope}
    \clip (1,0) .. controls (0.48,0.18) .. (p) .. controls (1,1.7) .. (2,2)
    to [out=0,in=190] (3,2.2) |- (1.5,0.3) -- cycle;



    \fill[green,opacity=0.2] ($(p2) + (-0.8,-0.8)$) rectangle ($(p2) + (0.8,0.8)$);
  \end{scope}

  \node[green!50!black, anchor=south west] at ($(p2) + (-0.75,-0.75)$) {$\Omega_{p}$};
  \node[yellow!40!black] at ($(p2) + (0,0.25)$) {$\supp \alpha$};
  \node[blue,left] at (p) {$\Gamma_{1}$};
  \node[red,above] at (3,2.2) {$\Gamma_{0}$};

  \begin{scope}
    \clip (1,0) .. controls (0.48,0.18) .. (p) .. controls (1,1.7) .. (2,2)
    to [out=0,in=190] (3,2.2) |- (1.5,0.3) -- cycle;

    \draw[very thick,green!50!black] ($(p2) + (-0.8,-0.8)$) rectangle ($(p2) + (0.8,0.8)$);
  \end{scope}

  \begin{scope}
    \clip ($(p2) + (-0.8,-0.8)$) rectangle ($(p2) + (0.8,0.8)$);
    \clip (1,0) .. controls (0.48,0.18) .. (p) .. controls (1,1.7) .. (2,2)
    to [out=0,in=190] (3,2.2) |- (1.5,0.3) -- cycle;

    \draw[line width=1.5mm,green!50!black] (1,0) .. controls (0.48,0.18) .. (p) .. controls (1,1.7) .. (2,2);
    \draw[line width=1.5mm,green!50!black] (2,2) to [out=0,in=190] (3,2.2);

  \end{scope}

  \draw[very thick,blue] (1,0) .. controls (0.48,0.18) .. (p) .. controls (1,1.7) .. (2,2);
  \draw[very thick,blue] (1,0) to  [out=10,in = 180] (3,0.2);
  \draw[very thick,blue,dashed] (3,0.2) to [out=0,in=170] (4,0);
  \draw[very thick,red]
  (2,2) to [out=0,in=190] (3,2.2);
  \draw[very thick, red, dashed] (3,2.2) to[out=10,in=190] (4,2.5);

  \filldraw (p2) circle (0.025);
  \node[below] at (p2) {$p$};

  \node at (1.7,0.8) {$\Omega$};

\end{tikzpicture}

  \caption{\label{fig:cutoff-function-in-cylinder}Illustration of the setting of \Cref{le:cutoff-function-in-cylinder}}
\end{figure}

\begin{proof}
  Note that $\alpha f \in \hH(\rot,\Omega) \cap \cH_{\Gamma_{0}}(\rot,\Omega)$ by \Cref{le:mult-with-cut-of-stays-in-hH}. Hence, for $\phi \in \Cc(\R^{3})$ we have
  \begin{align*}
    \MoveEqLeft
    \scprod{\rot \phi}{\alpha f}_{\Lp{2}(\Omega_{p})} - \scprod{\phi}{\rot \alpha f}_{\Lp{2}(\Omega_{p})} \\
    &= \scprod{\rot \phi}{\alpha f}_{\Lp{2}(\Omega)} - \scprod{\phi}{\rot \alpha f}_{\Lp{2}(\Omega)}
    = \scprod{\tanxtr \phi}{\alpha \tantr f}_{\Lp{2}(\Gamma_{1})} \\
    &= \scprod{\tanxtr \phi}{\alpha \tantr f}_{\Lp{2}(\Gamma_{1,p})}
    = \scprod{\tanxtr \phi}{\alpha \tantr f}_{\Lp{2}(\partial\Omega_{p})}
  \end{align*}
  Hence, $\alpha f\in \hH(\rot,\Omega_{p})$. Moreover, if we choose $\phi \in \Cc_{\cl{\Gamma_{1,p}}}(\R^{3})$ in the previous equation we conclude $\alpha f \in \cH_{\partial\Omega \setminus \cl{\Gamma_{1,p}}}(\rot,\Omega_{p})$.
\end{proof}

\begin{remark}
  Note that in general the intersection of Lipschitz domains is not a Lipschitz domain. Roughly speaking this is because of the angle between the sets. However, for $p \in \partial\Omega$ the intersection $\Omega_{p} \coloneqq C_{\epsilon,h}(p) \cap \Omega$ of the cylinder $C_{\epsilon,h}(p)$ and $\Omega$ is again a Lipschitz domain. Moreover, $(\Omega_{p},\Gamma_{0} \cap \Omega_{p})$ is a Lipschitz pair. Roughly speaking here the assumptions on $C_{\epsilon,h}(p)$ make sure that angle is not $0$.
\end{remark}

Finally, we show that density result for unbounded domains.

\begin{theorem}\label{th:Cc-gamma0-dense-in-hH-cap-cH-gamma0-unbounded}
  Let $\Omega$ be an (unbounded) Lipschitz domain such that $(\Omega,\Gamma_{0})$ is a Lipschitz pair.
  Then $\Cc_{\Gamma_{0}}(\R^{3})$ is dense in $\hH_{\partial\Omega}(\rot,\Omega) \cap \cH_{\Gamma_{0}}(\rot,\Omega)$ w.r.t.\ $\norm{\argdot}_{\hH_{\partial\Omega}(\rot,\Omega)}$.
\end{theorem}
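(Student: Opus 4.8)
The plan is to reduce the unbounded case to the already-established bounded case (\Cref{th:Cc-gamma0-dense-in-hH-cap-cH-gamma0}) by a cutoff-and-localize argument. Fix $f \in \hH_{\partial\Omega}(\rot,\Omega) \cap \cH_{\Gamma_{0}}(\rot,\Omega)$ and $\epsilon > 0$. First I would choose a smooth radial cutoff $\chi \in \Cc(\R^{3})$ with $\chi \equiv 1$ on a large ball $\ball_{R}(0)$ and $\supp \chi \subseteq \ball_{2R}(0)$, with $\norm{\grad\chi}_{\infty}$ bounded independently of $R$ (rescaling a fixed bump). Since $f \in \Lp{2}(\Omega)$, $\rot f \in \Lp{2}(\Omega)$, and $\tantr f \in \Lptan{2}(\partial\Omega)$, dominated convergence gives $\norm{f - \chi f}_{\Lp{2}(\Omega)} \to 0$, $\norm{\tantr f - \chi\tantr f}_{\Lp{2}(\partial\Omega)} \to 0$ as $R \to \infty$, and using $\rot(\chi f) = \chi\rot f + \grad\chi\times f$ together with the $\Lp{2}(\Omega)$-integrability of $f$ on the annulus $\ball_{2R}\setminus\ball_{R}$, also $\norm{\rot f - \rot(\chi f)}_{\Lp{2}(\Omega)}\to 0$. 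By the corollary following \Cref{le:mult-with-cut-of-stays-in-hH}, $\chi f \in \hH_{\partial\Omega}(\rot,\Omega)\cap\cH_{\Gamma_{0}}(\rot,\Omega)$ with $\tantr(\chi f) = \chi\tantr f$. Hence for $R$ large enough $\norm{f - \chi f}_{\hH_{\partial\Omega}(\rot,\Omega)} \leq \epsilon$, and it suffices to approximate $g \coloneqq \chi f$, which has bounded support.

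Next I would localize $g$ near the boundary using a finite partition of unity adapted to the bounded set $K \coloneqq \supp g$. Cover $\partial\Omega \cap K$ by finitely many cylinders $C_{i} = C_{\epsilon_{i},h_{i}}(p_{i})$, $p_{i}\in\partial\Omega$, together with one open set $C_{0}$ with $\cl{C_{0}}\subseteq\Omega$, and pick a subordinate partition of unity $(\alpha_{i})_{i=0}^{m}$ with $\sum_{i}\alpha_{i}\equiv 1$ on a neighborhood of $K$, so $g = \sum_{i=0}^{m}\alpha_{i}g$. The interior piece $\alpha_{0}g$ lies in $\cH(\rot,\Omega)$ with compact support in $\Omega$, hence is approximated in $\soboH(\rot,\Omega)$ — and thus in $\norm{\argdot}_{\hH_{\partial\Omega}}$, since its tangential trace vanishes — by $\Cc(\Omega)\subseteq\Cc_{\Gamma_{0}}(\R^{3})$ functions via the standard mollifier argument. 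For each boundary piece, \Cref{le:cutoff-function-in-cylinder} gives $\alpha_{i}g \in \hH(\rot,\Omega_{p_{i}})\cap\cH_{\partial\Omega_{p_{i}}\setminus\cl{\Gamma_{1}\cap\supp\alpha_{i}}}(\rot,\Omega_{p_{i}})$, where $\Omega_{p_{i}} = \Omega\cap C_{i}$ is a bounded Lipschitz domain and, by the final remark, $(\Omega_{p_{i}},\Gamma_{0}\cap\Omega_{p_{i}})$ — equivalently $(\Omega_{p_{i}},\partial\Omega_{p_{i}}\setminus\cl{\Gamma_{1}\cap\supp\alpha_{i}})$ — is a Lipschitz pair. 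Now I would apply the bounded result \Cref{th:Cc-gamma0-dense-in-hH-cap-cH-gamma0} on $\Omega_{p_{i}}$ with the homogeneous part of the boundary being $\partial\Omega_{p_{i}}\setminus\cl{\Gamma_{1}\cap\supp\alpha_{i}}$, obtaining a sequence $(\phi_{i,n})_{n}$ in $\Cc_{\partial\Omega_{p_{i}}\setminus\cl{\Gamma_{1}\cap\supp\alpha_{i}}}(\R^{3})$ converging to $\alpha_{i}g$ in $\norm{\argdot}_{\hH(\rot,\Omega_{p_{i}})}$.

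The remaining point is to pass from approximations on the local pieces $\Omega_{p_{i}}$ back to a global approximation on $\Omega$ in $\norm{\argdot}_{\hH_{\partial\Omega}(\rot,\Omega)}$, and this is the step that needs the most care. The issue is that $\phi_{i,n}$ is a priori only defined (as an element of $\soboH(\rot,\Omega_{p_{i}})$) on the cylinder, and its restriction to $\Omega$ may not vanish near $\partial\Omega_{p_{i}}\setminus\partial\Omega$. This is precisely why one takes the cutoff $\alpha_{i}$ into the construction: multiplying $\phi_{i,n}$ by a fixed $\beta_{i}\in\Cc(C_{i})$ with $\beta_{i}\equiv 1$ on $\supp\alpha_{i}$ produces $\beta_{i}\phi_{i,n}\in\Cc(\R^{3})$ whose support stays inside $C_{i}$, so it extends by zero to all of $\R^{3}$, and since $\phi_{i,n}$ vanishes near $\partial\Omega_{p_{i}}\setminus\cl{\Gamma_{1}\cap\supp\alpha_{i}} \supseteq \cl{\Gamma_{0}}\cap\cl{C_{i}}$, the product $\beta_{i}\phi_{i,n}$ lies in $\Cc_{\Gamma_{0}}(\R^{3})$. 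One checks $\beta_{i}\phi_{i,n}\to\beta_{i}(\alpha_{i}g) = \alpha_{i}g$ in $\norm{\argdot}_{\hH_{\partial\Omega}(\rot,\Omega)}$, using the product rules for $\rot$ and for the tangential trace (the trace of $\beta_{i}\phi_{i,n}$ on $\partial\Omega$ equals $\beta_{i}\tantr\phi_{i,n}$, which on $\Gamma_{1}$ agrees with $\beta_{i}$ times the trace, matching $\alpha_{i}\tantr g$ on $\supp\alpha_{i}$ and being zero elsewhere on $\partial\Omega$). Summing, $\sum_{i=0}^{m}\beta_{i}\phi_{i,n}\in\Cc_{\Gamma_{0}}(\R^{3})$ converges to $g$ in $\norm{\argdot}_{\hH_{\partial\Omega}(\rot,\Omega)}$, and combining with the first paragraph finishes the proof by a diagonal argument over $R$ and $n$. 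The main obstacle is thus bookkeeping the supports and boundary pieces so that all the localized objects genuinely satisfy the hypotheses of the bounded theorem and the cutoff multiplications land back in $\Cc_{\Gamma_{0}}(\R^{3})$.
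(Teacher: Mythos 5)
Your proposal is correct and follows essentially the same route as the paper's proof: cut off with $\chi$ to reduce to bounded support, localize with a partition of unity subordinate to boundary cylinders (plus an interior piece), apply \Cref{le:cutoff-function-in-cylinder} and the bounded result \Cref{th:Cc-gamma0-dense-in-hH-cap-cH-gamma0} on each local domain $\Omega_{p_{i}}$, and reassemble. Your extra multiplication by $\beta_{i}$, which forces the local approximants to have support in $C_{i}$ so that they extend by zero and lie in $\Cc_{\Gamma_{0}}(\R^{3})$, is a more careful treatment of a point the paper only asserts parenthetically (note only that the relevant inclusion is $\Gamma_{0}\cap C_{i}\subseteq\partial\Omega_{p_{i}}\setminus\cl{\Gamma_{1,i}}$ rather than the closure version you wrote, which fails at interface points, but this does not affect the argument).
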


\begin{proof}
  Let $f \in \hH_{\partial\Omega}(\rot,\Omega) \cap \cH_{\Gamma_{0}}(\rot,\Omega)$.
  \begin{steps}
    \item \emph{Reduce to bounded case.} Then for given $\epsilon$ there exists (by the monotone convergence theorem) an $r > 0$ such that
    \begin{align*}
      \norm{f - \indicator_{\ball_{r}(0)} f}_{\Lp{2}(\Omega)} &\leq \epsilon, \\
      \norm{\rot f - \indicator_{\ball_{r}(0)} \rot f}_{\Lp{2}(\Omega)} &\leq \epsilon,\\
      \norm{\tantr f - \indicator_{\ball_{r}(0)} \tantr f}_{\Lp{2}(\partial\Omega)} &\leq \epsilon.
    \end{align*}
    This is a consequence of monotone convergence. We choose $\chi \in \Cc(\R^{3})$ such that
    \begin{equation*}
      \supp \chi \subseteq \ball_{r+2}(0),\quad \chi(\zeta) \in [0,1],\quad \chi = 1\ \text{on}\ \ball_{r}(0) \quad\text{and}\quad \norm{\grad \chi}_{\infty} \leq 1.
    \end{equation*}
    Such a function can be constructed by convolving $\indicator_{\ball_{r}(0)}$ with a suitable mollifier.
    By \Cref{le:mult-with-cut-of-stays-in-hH} we have $\chi f \in \hH_{\partial\Omega}(\rot,\Omega) \cap \cH_{\Gamma_{0}}(\rot,\Omega)$ with $\tantr \chi f = \chi \tantr f$.
    Moreover, by the triangle inequality and $\chi = 1$ on $\ball_{r}(0)$ we have
    \begin{align*}
      \norm{f - \chi f}_{\Lp{2}(\Omega)} &\leq 2\epsilon, \\
      \norm{\rot f - \rot \chi f}_{\Lp{2}(\Omega)} &\leq \norm{\rot f - \chi \rot f}_{\Lp{2}(\Omega)} + \norm{\grad \chi \times f}_{\Lp{2}(\Omega)} \leq 4 \epsilon,\\
      \norm{\tantr f - \chi \tantr f}_{\Lp{2}(\partial\Omega)} &\leq 2\epsilon.
    \end{align*}
    Hence, $\norm{f - \chi f}_{\hH_{\partial\Omega}(\rot,\Omega)} \leq 8 \epsilon$.

    \item \emph{Localize by partition of unity.} We define $O_{p} \coloneqq C_{\epsilon,h}(p)$ for $p \in \partial\Omega \cap \ball_{r+2}(0)$, where $C_{\epsilon,h}(p)$ is a cylinder that fits the Lipschitz assumptions of $\Omega$ and $O_{p} \coloneqq \ball_{\epsilon}(p)$ for $p \in \Omega \cap \ball_{r+2}(0)$, where $\epsilon > 0$ is such that $\ball_{\epsilon}(p) \subseteq \Omega$. Then $(O_{p})_{p \in (\partial\Omega \cup \Omega) \cap \ball_{r+2}(0)}$ is a covering of $\cl{\Omega \cap \ball_{r+2}(0)}$. Since $\cl{\Omega \cap \ball_{r+2}(0)}$ is compact there exists a finite subcovering $(O_{i})_{i=1}^{k}$.
    We define $\Omega_{i} = O_{i} \cap \Omega$.
    Moreover, we employ a partition of unity subordinate to this covering and obtain functions $(\alpha_{i})_{i=1}^{k}$ such that
    \begin{equation*}
      \alpha_{i} \in \Cc(O_{i}), \quad \alpha_{i}(\zeta) \in [0,1],\quad\text{and}\quad \sum_{i=1}^{k} \alpha_{i}(\zeta) = 1 \quad \text{for}\quad \zeta \in \cl{\Omega \cap \ball_{r+2}(0)}
    \end{equation*}
    We define $\Gamma_{1,i} \coloneqq \Gamma_{1} \cap \supp \alpha_{i}$.
    Hence, if $O_{i}$ is a cylinder, then $\alpha_{i}\chi f \in \hH_{\partial\Omega_{i}}(\rot,\Omega_{i}) \cap \cH_{\partial\Omega_{i} \setminus \cl{\Gamma_{1,i}}}(\rot,\Omega_{i})$ and if $O_{i}$ is a ball, then $\alpha_{i} \chi f \in \cH_{\partial\Omega_{i}}(\rot,\Omega_{i})$.
    In both cases we can extend these functions to $\Omega$ by $0$ outside of $\Omega_{i}$ and obtain an element of $\hH_{\partial\Omega}(\rot,\Omega) \cap \cH_{\Gamma_{0}}(\rot,\Omega)$.

    Moreover, we can approximate these functions by $\Cc_{\partial\Omega_{i} \setminus \cl{\Gamma_{1,i}}}(\R^{3})$ functions (that vanish on $\Omega \setminus \Omega_{i}$) w.r.t.\ $\norm{\argdot}_{\hH(\rot,\Omega_{i})}$ by \Cref{th:Cc-gamma0-dense-in-hH-cap-cH-gamma0}.
    Since the extended versions are $0$ outside of $\Omega_{i}$ we conclude this approximation also in $\hH_{\partial\Omega}(\rot,\Omega)$.
    Hence, we can approximate every $\alpha_{i}\chi f$ by a sequence in $\Cc_{\Gamma_{0}}(\R^{3})$ (w.r.t.\ $\norm{\argdot}_{\hH_{\partial\Omega}(\rot,\Omega)}$).

    We denote these sequences by $(f_{i,n})_{n\in\N}$ and we choose $n \in \N$ so large that $\norm{\alpha_{i}\chi f - f_{i,n}}_{\hH_{\partial\Omega}(\rot,\Omega)} \leq \frac{\epsilon}{k}$. Since $\chi f = 0$ outside of $\ball_{r+2}(0)$, we have $\sum_{i=1}^{k}\alpha_{i}\chi f = \chi f$ and therefore
    \begin{align*}
      \norm[\Big]{f - \sum_{i=1}^{k}f_{i,n}}_{\hH_{\partial\Omega}(\rot,\Omega)}
      &\leq \norm{f - \chi f}_{\hH_{\partial\Omega}(\rot,\Omega)}
        + \sum_{i=1}^{k} \norm{\alpha_{i}\chi f - f_{i,n}}_{\hH_{\partial\Omega}(\rot,\Omega)} \\
      &\leq 8\epsilon + \epsilon. \qedhere
    \end{align*}
  \end{steps}
\end{proof}

\begin{corollary}
  Let $\Omega$ be an (unbounded) Lipschitz domain. Then $\Cc(\Omega)$ is dense in $\cH(\rot,\Omega)$ w.r.t.\ $\norm{\argdot}_{\soboH(\rot,\Omega)}$.
\end{corollary}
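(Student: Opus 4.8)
The plan is to obtain the corollary as the special case $\Gamma_{0} = \partial\Omega$ of \Cref{th:Cc-gamma0-dense-in-hH-cap-cH-gamma0-unbounded}. First I would check that $(\Omega,\partial\Omega)$ is indeed a Lipschitz pair: the relative boundary $\partial(\partial\Omega)$ inside $\partial\Omega$ is empty, so the condition defining a strongly Lipschitz subset of $\partial\Omega$ (a chart requirement to be imposed at every $p \in \partial\Gamma_{0}$) is vacuously satisfied. Thus \Cref{th:Cc-gamma0-dense-in-hH-cap-cH-gamma0-unbounded} applies and yields that $\Cc_{\partial\Omega}(\R^{3})$ is dense in $\hH_{\partial\Omega}(\rot,\Omega) \cap \cH_{\partial\Omega}(\rot,\Omega)$ with respect to $\norm{\argdot}_{\hH_{\partial\Omega}(\rot,\Omega)}$.

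It then remains only to translate this statement into the claimed one. Since $\cH_{\partial\Omega}(\rot,\Omega) \subseteq \hH_{\partial\Omega}(\rot,\Omega)$, the intersection appearing above is just $\cH_{\partial\Omega}(\rot,\Omega) = \cH(\rot,\Omega)$. On this space the tangential trace vanishes by definition, so that $\norm{f}_{\hH_{\partial\Omega}(\rot,\Omega)}^{2} = \norm{f}_{\Lp{2}(\Omega)}^{2} + \norm{\rot f}_{\Lp{2}(\Omega)}^{2} = \norm{f}_{\soboH(\rot,\Omega)}^{2}$ for every $f \in \cH(\rot,\Omega)$; hence the two norms coincide on $\cH(\rot,\Omega)$. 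Finally, $\Cc_{\partial\Omega}(\R^{3})$ is by definition the set of restrictions to $\Omega$ of functions in $\Cc(\R^{3})$ whose support has positive distance from $\partial\Omega$; extending such a restriction by zero identifies $\Cc_{\partial\Omega}(\R^{3})$ with $\Cc(\Omega)$. Combining these three identifications, the density just quoted is precisely the assertion that $\Cc(\Omega)$ is dense in $\cH(\rot,\Omega)$ with respect to $\norm{\argdot}_{\soboH(\rot,\Omega)}$.

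I do not expect any real obstacle here: the statement is a direct corollary, and the only point needing a sentence of justification is that $(\Omega,\partial\Omega)$ qualifies as a Lipschitz pair, which is immediate from $\partial(\partial\Omega) = \emptyset$. (For bounded $\Omega$ this recovers \cite[Thm.~4.5]{BaPaScho16}; the content of the corollary is therefore the extension to unbounded $\Omega$, which is exactly what \Cref{th:Cc-gamma0-dense-in-hH-cap-cH-gamma0-unbounded} supplies.)
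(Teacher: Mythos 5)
Your proposal is correct and follows exactly the paper's route: the paper proves this corollary in one line as the special case $\Gamma_{0} = \partial\Omega$ of \Cref{th:Cc-gamma0-dense-in-hH-cap-cH-gamma0-unbounded}, and your additional remarks (that $(\Omega,\partial\Omega)$ is a Lipschitz pair, that the $\hH_{\partial\Omega}$-norm reduces to the $\soboH(\rot,\Omega)$-norm on $\cH(\rot,\Omega)$, and that restricting $\Cc_{\partial\Omega}(\R^{3})$ to $\Omega$ gives $\Cc(\Omega)$) are just the spelled-out justifications the paper leaves implicit.
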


\begin{proof}
  This is just a special case of \Cref{th:Cc-gamma0-dense-in-hH-cap-cH-gamma0-unbounded}, namely $\Gamma_{0} = \partial\Omega$.
\end{proof}

In order to generalize the other main result for unbounded domains we have two options: Either repeating the strategy of this section with small adaptions or repeating \Cref{sec:density-without-homogeneous-part}. The second option is more convenient as we only need to observe that boundedness was only used for \Cref{th:Cc-gamma0-dense-in-hH-cap-cH-gamma0}, which we can now replace by \Cref{th:Cc-gamma0-dense-in-hH-cap-cH-gamma0-unbounded}. Hence, we obtain the following theorem.

\begin{theorem}\label{th:density-in-hH-gamma1-unbounded}
  Let $\Omega$ be an (unbounded) Lipschitz domain such that $(\Omega,\Gamma_{1})$ is a Lipschitz pair.
  Then $\Cc(\R^{3})$ is dense in $\hH_{\Gamma_{1}}(\rot,\Omega)$ w.r.t.\ $\norm{\argdot}_{\hH_{\Gamma_{1}}(\rot,\Omega)}$.
\end{theorem}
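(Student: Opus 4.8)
\providecommand{\restanxtr}{\widetilde{\gamma}_{\tau}}
The plan is to reuse the duality argument of \Cref{sec:density-without-homogeneous-part} almost verbatim: boundedness of $\Omega$ entered there only through \Cref{th:Cc-gamma0-dense-in-hH-cap-cH-gamma0}, which we may now replace by its unbounded version \Cref{th:Cc-gamma0-dense-in-hH-cap-cH-gamma0-unbounded}. First I would note that \Cref{le:cH-rot-perp,le:ortho-projection-on-cH-perp,le:range-of-tantr-characterizations} remain valid for unbounded $\Omega$ with their original proofs, since these are purely Hilbert-space arguments inside $\soboH(\rot,\Omega)$ whose only external ingredient is the density of $\Cc(\Omega)$ in $\cH(\rot,\Omega)$ (used in \Cref{le:cH-rot-perp}), which for unbounded $\Omega$ is exactly the corollary to \Cref{th:Cc-gamma0-dense-in-hH-cap-cH-gamma0-unbounded}. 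Hence $\cH(\rot,\Omega)^{\perp} = \dset{f \in \soboH(\rot,\Omega)}{\rot\rot f = -f}$ is $\rot$-invariant, and $\hH(\rot,\Omega) \cap \cH_{\Gamma_{0}}(\rot,\Omega)$ is invariant under the orthogonal projection $P$ onto $\cH(\rot,\Omega)^{\perp}$.

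Second, I would re-establish the analogue of \Cref{th:characterization-of-hH-Gamma-with-closure-of-test-functions}: for $f \in \hH_{\Gamma_{1}}(\rot,\Omega)$ the identity
\[
  \scprod{\tanxtr f}{\tantr g}_{\Lp{2}(\Gamma_{1})} = \scprod{\rot f}{g}_{\Lp{2}(\Omega)} - \scprod{f}{\rot g}_{\Lp{2}(\Omega)}
\]
holds for $g \in \Cc_{\Gamma_{0}}(\R^{3})$ by \Cref{def:weak-tangential-trace}, and since both sides are continuous in $g$ with respect to $\norm{\argdot}_{\hH(\rot,\Omega)}$, \Cref{th:Cc-gamma0-dense-in-hH-cap-cH-gamma0-unbounded} extends it to all $g \in \hH(\rot,\Omega) \cap \cH_{\Gamma_{0}}(\rot,\Omega)$. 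Then I would transcribe the proof of \Cref{th:density-in-hH-gamma1} line by line. The operator $\tanxtr \colon \hH_{\Gamma_{1}}(\rot,\Omega) \subseteq \soboH(\rot,\Omega) \to \Lptan{2}(\Gamma_{1})$ is closed by the definition of $\norm{\argdot}_{\hH_{\Gamma_{1}}(\rot,\Omega)}$, and, using that $\Cc(\R^{3})$ is dense in $\soboH(\rot,\Omega)$ (which still holds for unbounded Lipschitz $\Omega$), its restriction $\restanxtr$ to $\Cc(\R^{3})$ has a well-defined adjoint with $\restanxtr\adjun \supseteq \tanxtr\adjun$. Testing the adjoint relation against $\phi \in \Cc_{\Gamma_{1}}(\R^{3})$ forces the representative $g$ into $\cH(\rot,\Omega)^{\perp}$ with $\rot g \in \cH_{\Gamma_{0}}(\rot,\Omega)$, and then against arbitrary $\phi \in \Cc(\R^{3})$ forces $\rot g \in \hH(\rot,\Omega)$ and $q = \tantr \rot g$; via \Cref{le:range-of-tantr-characterizations} this gives $\dom\restanxtr\adjun \subseteq \tantr\big(\hH(\rot,\Omega) \cap \cH_{\Gamma_{0}}(\rot,\Omega)\big)$. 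The reverse inclusion $\dom\tanxtr\adjun \supseteq \tantr\big(\hH(\rot,\Omega) \cap \cH_{\Gamma_{0}}(\rot,\Omega)\big)$ follows from \Cref{le:range-of-tantr-characterizations} together with the extended identity above. Squeezing through $\restanxtr\adjun \supseteq \tanxtr\adjun$ yields $\restanxtr\adjun = \tanxtr\adjun$, hence $\cl{\restanxtr} = \restanxtr\adjun[2] = \tanxtr\adjun[2] = \tanxtr$, and since the graph norm of $\tanxtr$ coincides with $\norm{\argdot}_{\hH_{\Gamma_{1}}(\rot,\Omega)}$, this is exactly the asserted density of $\Cc(\R^{3})$.

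I expect the only point needing genuine care to be the bookkeeping of the first two paragraphs: verifying that none of the auxiliary results of \Cref{sec:density-without-homogeneous-part} tacitly used compactness of $\Omega$ or $\partial\Omega$, and that $\Cc(\R^{3})$ is dense in $\soboH(\rot,\Omega)$ in the unbounded case so that the adjoint formalism applies. Once the unbounded versions of these auxiliary statements are on record, the remainder is a literal copy of the bounded argument and presents no further obstacle; in particular no additional localization or cutoff estimates are required beyond those already established in the present section.
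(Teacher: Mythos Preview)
Your proposal is correct and follows exactly the route the paper takes: the paper's own proof is the single sentence ``Repeat \Cref{sec:density-without-homogeneous-part} and replace \Cref{th:Cc-gamma0-dense-in-hH-cap-cH-gamma0} with \Cref{th:Cc-gamma0-dense-in-hH-cap-cH-gamma0-unbounded}'', and the text immediately preceding it already records that boundedness entered \Cref{sec:density-without-homogeneous-part} solely through \Cref{th:Cc-gamma0-dense-in-hH-cap-cH-gamma0}. Your write-up is simply an explicit unpacking of that sentence, including the bookkeeping that \Cref{le:cH-rot-perp,le:ortho-projection-on-cH-perp,le:range-of-tantr-characterizations} and \Cref{th:characterization-of-hH-Gamma-with-closure-of-test-functions} survive the passage to unbounded $\Omega$; one small caution is that you should source the density of $\Cc(\R^{3})$ in $\soboH(\rot,\Omega)$ independently (a standard cutoff-plus-mollification fact) rather than from the corollary that the paper derives \emph{after} the present theorem, to avoid any appearance of circularity.
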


\begin{proof}
  Repeat \Cref{sec:density-without-homogeneous-part} and replace \Cref{th:Cc-gamma0-dense-in-hH-cap-cH-gamma0} with \Cref{th:Cc-gamma0-dense-in-hH-cap-cH-gamma0-unbounded}.
\end{proof}

\begin{corollary}
  Let $\Omega$ be an (unbounded) Lipschitz domain. Then $\Cc(\R^{3})$ is dense in $\soboH(\rot,\Omega)$ w.r.t.\ $\norm{\argdot}_{\soboH(\rot,\Omega)}$.
\end{corollary}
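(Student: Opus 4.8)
The plan is to obtain this as the special case $\Gamma_{1} = \emptyset$ of \Cref{th:density-in-hH-gamma1-unbounded}, so essentially nothing new has to be proved. First I would observe that $(\Omega,\emptyset)$ is a Lipschitz pair for any Lipschitz domain $\Omega$: since $\cl{\emptyset} = \emptyset$, the condition characterizing a strongly Lipschitz subset of $\partial\Omega$ (which only quantifies over points of the closure) is vacuously satisfied. Next I would identify $\hH_{\emptyset}(\rot,\Omega)$ with $\soboH(\rot,\Omega)$. By \Cref{def:weak-tangential-trace} with $\Gamma = \emptyset$ the relevant test functions lie in $\Cc_{\partial\Omega}(\R^{3})$, i.e.\ they vanish in a neighbourhood of $\partial\Omega$, so $\tanxtr\phi = 0$ and the pairing on the right-hand side is $0$; moreover $\scprod{f}{\rot\phi}_{\Lp{2}(\Omega)} - \scprod{\rot f}{\phi}_{\Lp{2}(\Omega)} = 0$ holds for every $f \in \soboH(\rot,\Omega)$ by the classical integration by parts on $\Omega$ with compactly supported $\phi$. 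Hence $q = 0 \in \Lptan{2}(\emptyset)$ witnesses that every $f \in \soboH(\rot,\Omega)$ has an $\Lptan{2}(\emptyset)$ tangential trace, and the additional summand $\norm{\tantr f}_{\Lp{2}(\emptyset)}^{2}$ in the norm of $\hH_{\emptyset}(\rot,\Omega)$ is zero, so $\norm{\argdot}_{\hH_{\emptyset}(\rot,\Omega)} = \norm{\argdot}_{\soboH(\rot,\Omega)}$.

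With these identifications, \Cref{th:density-in-hH-gamma1-unbounded} applied to the pair $(\Omega,\emptyset)$ states precisely that $\Cc(\R^{3})$ is dense in $\soboH(\rot,\Omega)$ with respect to $\norm{\argdot}_{\soboH(\rot,\Omega)}$, which is the claim. There is no genuine obstacle; the only points worth a sentence are the admissibility of the empty interface (so that \Cref{th:density-in-hH-gamma1-unbounded} is applicable) and the vacuity of the weak tangential-trace condition for $\Gamma_{1} = \emptyset$. One could of course also prove the corollary from scratch by repeating the cutoff-and-localization argument of \Cref{sec:unbounded-domains} without any boundary constraint, but invoking the already established theorem is the shortest route.
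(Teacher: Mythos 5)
Your proposal is correct and is exactly the paper's argument: the paper proves this corollary in one line as the special case $\Gamma_{1} = \emptyset$ of \Cref{th:density-in-hH-gamma1-unbounded}, and your additional remarks (the empty set is trivially an admissible Lipschitz subset, the weak trace condition is vacuous so $\hH_{\emptyset}(\rot,\Omega) = \soboH(\rot,\Omega)$ with identical norms) just spell out the identifications left implicit there.
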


\begin{proof}
  This is just a special case of \Cref{th:density-in-hH-gamma1-unbounded}, namely $\Gamma_{1} = \emptyset$.
\end{proof}

\section{Conclusion}%
\label{sec:conclusion}

We have defined $\soboH(\rot,\Omega)$ fields that possess an $\Lp{2}$ tangential trace on $\Gamma_{1} \subseteq \partial\Omega$ via a weak approach (by representation in the $\Lp{2}(\Gamma_{1})$ inner product) and showed that the $\conC^{\infty}$ fields are dense in this space. This is a generalization of \cite{BeBeCoDa97}, where the case $\Gamma_{1} = \partial\Omega$ was regarded. Moreover, we lifted the result to unbounded domains. In fact for partial tangential traces there is the second question about the density with additional homogeneous boundary conditions on $\Gamma_{0} = \partial\Omega \setminus \cl{\Gamma_{1}}$.
This was exactly the open problem in \cite[Sec.~5]{WeSt13}, which we could solve.
In particular they were asking whether $\soboH_{\Gamma_{0}}^{1}(\Omega)$ is dense in $\hH(\rot,\Omega) \cap \cH_{\Gamma_{0}}(\rot,\Omega)$, which is in fact a weaker version of \Cref{th:Cc-gamma0-dense-in-hH-cap-cH-gamma0}. We answered this question even for unbounded domains with \Cref{th:Cc-gamma0-dense-in-hH-cap-cH-gamma0-unbounded}.


%


\end{document}